\newtheorem{theorem}{Theorem}[section]
\newtheorem{lemma}[theorem]{Lemma}
\numberwithin{equation}{section}
\newtheorem{Thm}{Theorem}[section]
\newtheorem{Pro}[Thm]{Proposition}
\newenvironment{Pf}[1]
{\trivlist\item[]{\it #1\@. }}{\hspace*{\fill}$\Box$\endtrivlist}
\renewcommand{\marginpar}[1]{}
\def\Empty{}
\newcommand\oplabel[1]{
  \def\OpArg{#1} \ifx \OpArg\Empty {} \else
  	\label{#1}
  \fi}
\newcommand{\comm}[1]{}
\renewcommand{\epsilon}{\varepsilon}
\renewcommand{\rho}{\varrho}
\begin{document}














 
\title{Lengths of three simple periodic geodesics on a Riemannian $2$-sphere}

\author{ Yevgeny Liokumovich, Alexander Nabutovsky and Regina Rotman}
\date{August, 8, 2014}
\maketitle

\begin{abstract}
 Let $M$ be a Riemannian $2$-sphere. A classical theorem
of Lyusternik and Shnirelman asserts the existence of three distinct simple
non-trivial periodic geodesics on $M$. In this paper we prove that
there exist three simple periodic geodesics with lengths that
do not exceed $20d$,
where $d$ is the diameter of $M$.
\par
We also present
an upper bound that depends only on the area and diameter for the lengths
of the three simple periodic geodesics with positive indices that
appear as minimax critical values in the classical proofs of
the Lyusternik-Shnirelman theorem.
\par
Finally, we present
better bounds for these three lengths for ``thin" spheres, when the area $A$ is much
less than $d^2$, where the bounds for the lengths of the first two simple periodic geodesics 
are asymptotically optimal, when ${A\over d^2}\longrightarrow 0$.
\end{abstract}

\section{Main results.} \label{Main}

\begin{Thm} \label{Theorem1.1} 
 Let $M$ be a Riemannian manifold of diameter $d$ diffeomorphic
to $S^2$. There exist three distinct non-trivial simple periodic geodesics
of length not exceeding $20d$ on $M$.
\end{Thm}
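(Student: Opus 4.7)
The plan is to apply the Lyusternik-Shnirelman minimax framework on the space $\Sigma$ of unparametrized simple closed curves on $M$ (allowing degenerate point-curves) that is used in the classical proof of the existence of three simple closed geodesics, but to equip each stage with explicit length estimates. The pair $(\Sigma, \Sigma_0)$, where $\Sigma_0$ is the subspace of point-curves, carries three nontrivial cohomology classes in dimensions $1, 2, 3$, each giving rise to a minimax value $L_k = \inf_{\phi} \max_{s \in I^k} \operatorname{length}(\phi(s))$, where $\phi\colon I^k \to \Sigma$ ranges over $k$-parameter sweepouts representing the class. Combined with a length-nonincreasing deformation that preserves embeddedness (e.g.\ the Hass-Scott variant of curve shortening, or the original discrete Lyusternik-Shnirelman process), each $L_k$ is realized by a simple periodic geodesic, and a standard index argument forces the three resulting geodesics to be distinct.

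The problem thus reduces to constructing, for each $k \in \{1,2,3\}$, an explicit $k$-parameter sweepout of $M$ by simple closed curves of length at most $20d$ that represents the relevant cohomology class. For $k=1$, fix a point $p \in M$ so that $M = \overline{B(p,d)}$, together with two points realizing the diameter, and sweep $M$ by simple closed curves that begin at a point-curve, expand through a nested family filling out $M$, and contract to a point-curve. The naive choice of metric spheres $\partial B(p,t)$ need not yield simple curves, so instead I would use a short net in $M$ of spacing comparable to $d$, build a spanning tree of controlled length, and let the slices be boundaries of thickened subtrees, which are simple loops of length $O(d)$ by a Liokumovich-type slicing estimate. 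For $k=2$ and $k=3$, one augments this with extra parameters that vary the basepoint or the subtree being thickened, arranging that the combined family carries the higher cohomology class while every slice retains simplicity and the length bound.

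The main obstacle is the simultaneous accommodation of three competing constraints on every slice: simplicity, uniform length bound $\leq 20d$, and cohomological nontriviality of the ambient family. Level-set sweepouts preserve simplicity but have no upper length bound in terms of $d$ alone, while Birkhoff-style sweepouts are short but generically non-simple. The delicate case is $k=3$: the quantitative slicing theorem for short sweepouts of $2$-spheres must be upgraded to produce an embedded $3$-parameter family of curves of length $\leq 20d$, and one must verify that the three minimax values are realized on three distinct geodesic orbits, a point at which one typically perturbs the functional or appeals to the Lyusternik-Shnirelman category argument. The specific constant $20$ is delicate: it reflects the cumulative loss from replacing metric spheres by tree-thickenings, from iterating the construction across the parameter dimensions, and from the length overhead inherent in the embedding-preserving shortening process.
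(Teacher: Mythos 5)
There is a genuine gap, and it lies at the heart of your plan. Your strategy is to bound all three Lyusternik--Shnirelman minimax values by $20d$ by exhibiting, for each $k\in\{1,2,3\}$, a $k$-parameter sweepout of $M$ by simple closed curves of length $\leq 20d$ representing the relevant (co)homology class. But no such sweepout exists in general with a bound depending on $d$ alone: the paper explicitly conjectures (citing the Morse-landscape constructions of Frankel--Katz and Liokumovich) that the lengths of the three \emph{positive-index} minimax geodesics cannot be majorized solely in terms of the diameter, and its Theorem 1.2 bounds them only in terms of $d$ \emph{and} the area, with a factor $\max\{1,\ln(\sqrt{A}/d)\}$. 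Consistently with this, the ``Liokumovich-type slicing estimate'' you invoke to make the tree-thickening slices short gives bounds involving $\sqrt{A}$ (or $d\ln(\sqrt{A}/d)$), not $O(d)$; and a net of spacing comparable to $d$ in a manifold of diameter $d$ is essentially a single point, so the spanning-tree construction does not produce the required family. In short, the reduction ``Theorem 1.1 $\Leftarrow$ short sweepouts in all three dimensions'' is a reduction to a statement that is believed to be false.

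What the paper actually does is build in an escape hatch precisely for this failure mode. It takes two points $p,q$ realizing the diameter, forms Croke's convex geodesic digons from minimizing geodesics between them, and tries to contract each digon by the Birkhoff curve-shortening process so as to assemble a ``meridional slicing'' of $M$ into simple arcs of length $\leq 5d+\epsilon$ from $p$ to $q$; Proposition 2.2 then converts such a slicing (via the explicit cycles $u_1,u_2,u_3$ of Section 2) into three positive-index geodesics of lengths $\leq 5d,10d,20d$. The only thing that can obstruct the contraction of a digon is a simple periodic geodesic of index $0$ and length $\leq 2d$ --- and each such obstruction is itself one of the three desired geodesics. One obstruction forces a longer slicing ($\leq 10d+\epsilon$, Section 4); two obstructions force either a third index-$0$ geodesic of length $\leq 6d$ or a suitable relative $1$-cycle of curves of length $\leq 10d+\epsilon$ (Section 5). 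Your proposal has no mechanism for the case where short sweepouts do not exist, which is exactly the case where local minima of the length functional must be counted among the three geodesics; this is the missing idea, not a technicality about constants.
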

\medskip

Recall that  the term
``simple" means that the geodesics do not have self-intersections.
\par
The upper bounds $5d$ and $10d$ for lengths of two shortest
simple geodesics were found in [NR], but there we were not able to estimate
the length of the third simple periodic geodesic.
\par
Note that one can get a better upper
bound $4d$ for the length of the shortest non-trivial but not necessarily
simple periodic geodesic ([S], [NR0]). Yet the example of
a ``three-legged starfish" glued out of three 
long thin tentacles illustrates the difference between
majorization of the length of the shortest periodic geodesic and the length
of the shortest simple periodic geodesic: The shortest periodic geodesic
will be a figure eight shaped self-intersecting
curve hugging two of three tentacles. On the other hand it seems that
the shortest simple periodic geodesic will be
a curve that goes all the way along one of the tentacles and passes
through its top and the saddle point where two other tentacles intersect.
This example suggests that the approach developed in [NR] and [Sa] is not
applicable for majorization of lengths of simple periodic geodesics.
It also suggests that apparently there is no upper bound for the length
of the shortest simple periodic geodesic in terms of the area of $M$ alone.
On the other hand the examples of ellipsoids that are very close
to the standard sphere demonstrate that 1) there can be only
three simple periodic geodesics; and that 2) the length of the fourth periodic geodesic
(simple or not) cannot be majorized in terms of the diameter, the area and
even an upper bound for the absolute value of the curvature of the sphere.
(This is classical result of M. Morse, who proved that
the fourth periodic geodesic becomes uncontrollably large
for ellipsoids with distinct but very close semiaxes. Also,
a very nice extension of this result of Morse  for Riemannian
$2$-spheres of almost constant curvature was
proven by W. Ballmann in [B1].)
So, in this respect our result is the optimal possible. Finally note
that techniques from [LNR] lead to upper bound for the lengths
of three simple periodic geodesics in terms of the diameter {\it and}
the area of $M$, that are, however, much better than the estimates in [NR]
and the present paper, in the case
when the area is much smaller than the square of the
diameter (see Theorem 1.3 below). 
\par
The main idea of the proof of Theorem 1.1 is to express three homology classes of the space
of non-parametrized curves that are used in classical proofs of the Lyusternik-Shnirelman theorem by cycles that consist of simple closed curves ``mainly made''
of curves in a meridian-like family that connects two fixed points of $M$.
Following ideas of [Cr] and [NR] and using some observation of [M]
we attempt to construct such a family where the lengths of all meridians
are bounded by $const\ d$ for an appropriate $const$. Our repeated attempts
can be blocked only by appearence of different ``short" simple periodic
geodesics of index $0$. So, we either
get three short simple periodic geodesics of index
$0$, or our third attempt to construct
a ``meridional slicing" succeeds. Once one of our attempts succeeds,
and we get a slicing of $M$ into short meridians,
the original
proof of the Lyusternik-Shnirelman theorem yields the desired upper bounds.
\par
Also note that the standard proofs of the Lyusternik-Shnirelman theorem produce
three simple geodesics that have positive indices (cf. [T]). 
In fact, these geodesics appear as minimax critival values
corresponding to certain families of $1$-, $2$-, and $3$-dimensional
cycles with $\mathbb{Z}_2$ coefficients in the space of 
non-parametrized simple curves on the Riemannian $2$-sphere that
correspond to the gernerators of the respective homology groups of this space.
However,
our quantitative version of Lyusternik-Shnirelman produces three geodesics
that can be local minima of the length functional.
Can one find an effective version of the Lyusternik-Shnirelman theorem,
where one majorizes the lengths of three simple geodesics with positive
indices that appear in the original proof and are minimaxes for certain
specific families of simple closed curves? We believe that there are no such estimates solely in terms of
the diameter of $M$, and that Riemannian
metrics constructed in [L] using ideas from [FK]
provide a basis for counterexamples.
\par
On the other hand we will use the results from our paper [LNR] to prove 
that the lengths of three simple periodic geodesics that appear in the
classical proofs of the Lyusternik-Shnirelman theorem can be
majorized in terms of the volume and the area. In particular:
\medskip

\begin{Thm} \label{Theorem1.2}
Let $M$ be a Riemannian manifold diffeomorphic to the $2$-sphere of
area $A$ and diameter $d$. There
exist three distinct simple periodic geodesics on $M$ of length less than
$800d\max\{1,\ \ln {\sqrt{A}\over d}\}$ such that none of these
geodesics has index zero. (Moreover, these geodesics are exactly the
minimax geodesics that appear in the classical proofs of the Lyusternik-Shnirelman theorem).
\end{Thm}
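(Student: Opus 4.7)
The plan is to combine the sweepout construction developed in [LNR] with the classical Lyusternik-Shnirelman minimax framework applied to the space $\mathcal{S}$ of non-parametrized simple closed curves on $M$ (with point curves collapsed). The main technical input from [LNR] produces a $1$-parameter sweepout of $M$ by simple closed curves of length at most $c_1 \cdot d \max\{1, \ln(\sqrt{A}/d)\}$, starting and ending at point curves, representing the generator of $H_1(\mathcal{S}; \mathbb{Z}_2)$. I would take this as the minimax family for the first critical value $L_1$.

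To obtain $2$- and $3$-parameter families representing the generators of $H_2(\mathcal{S}; \mathbb{Z}_2)$ and $H_3(\mathcal{S}; \mathbb{Z}_2)$ used in the classical Lyusternik-Shnirelman argument, I would bootstrap from the $1$-parameter family. Concretely, one parametrizes sweepouts by auxiliary data (for example, choices of basepoint varying over an interval, or of a ``meridian direction'' varying over a disk) and glues the resulting sweepouts into higher-dimensional families. Each curve in the resulting $2$- or $3$-parameter family must remain simple and satisfy a length bound $c_k \cdot d \max\{1, \ln(\sqrt{A}/d)\}$ for some absolute constants $c_k$; absorbing these constants into the factor $800$ gives the stated bound.

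With the three families in hand, the standard minimax argument produces critical values $L_1 \leq L_2 \leq L_3$ of the length functional on $\mathcal{S}$. Applying a curve-shortening deformation that preserves simplicity (Hass-Scott, or equivalently the Lyusternik-Shnirelman ``$BG$''-deformation), each $L_k$ is attained by a simple closed geodesic; since $L_k$ is the minimax over $k$-dimensional cycles realizing a nontrivial class, the corresponding geodesic has Morse index at least $1$ (in particular, is not a local minimum of length on $\mathcal{S}$). A standard cup-length argument guarantees the three geodesics are distinct and coincide with the minimax critical values appearing in the classical Lyusternik-Shnirelman proof.

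The main obstacle will be the construction of the $2$- and $3$-parameter families with the claimed length bound while keeping every curve simple. A naive parameterization by basepoints or rotations typically produces curves that self-intersect or become long for some parameter values, and one must also check that the resulting higher-dimensional family does represent a nontrivial class in $H_*(\mathcal{S}; \mathbb{Z}_2)$. The fine control over sweepout geometry from [LNR], together with a careful homological bookkeeping argument, should be exactly what is needed to overcome this difficulty.
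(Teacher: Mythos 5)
Your overall skeleton --- import a length-controlled sweepout from [LNR] and feed it into the Lyusternik--Shnirelman minimax on the space of simple closed curves --- is the same as the paper's, but the step you yourself flag as ``the main obstacle'' is precisely where all of the work lies, and your proposed way around it (re-running the sweepout construction for a varying basepoint or ``meridian direction'' and gluing the results) is not what works: there is no reason such a parametrized family of sweepouts is continuous, stays simple, or represents the right homology class. The paper's route is different at exactly this point: it fixes a \emph{single} slicing of $M$ into nonself-intersecting loops based at one point $p$, of length $\leq L=200d\max\{1,\ \ln(\sqrt{A}/d)\}$ (Theorem~1.3B of [LNR]), so that the ``meridians'' are the images of the standard meridians of $S^2$ under one diffeomorphism, and then builds all three Lyusternik--Shnirelman cycles out of arcs of curves of that one family (this is the content of Proposition~2.2). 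The $1$- and $2$-dimensional cycles consist of curves made of two meridian arcs (length $\leq 2L$), and the $3$-dimensional cycle $v_3=X$ consists of curves each made, up to a small perturbation, of \emph{four} meridian arcs (length $\leq 4L$); this is why the constant is exactly $4\cdot 200=800$ rather than an unspecified absorbed constant. The delicate part, occupying Section~2.3, is (a) organizing these quadruples into a genuine relative $3$-cycle (a M\"obius-band family for each meridian direction, then rotated by $\pi$), (b) perturbing away the self-intersections at the base point and the doubly traced arcs while capping off the boundary by point curves, and (c) proving $[X]=[z_3]$ via an explicit homotopy through pairs of circles tangent at the South pole and a homeomorphism off the preimage of the longitudinal circles. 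None of this is supplied, or even sketched, by your ``bootstrap.''

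So the gap is concrete: you have not constructed the $2$- and, especially, the $3$-dimensional families of uniformly short \emph{simple} closed curves representing the generators of $H_2$ and $H_3$ of the curve space, and without them the minimax argument yields no bound on the lengths of the second and third geodesics. Once such families are in hand, the remainder of your argument --- minimax values, a simplicity-preserving shortening process, positivity of the indices, and distinctness via the standard Lyusternik--Shnirelman degeneracy argument --- is indeed routine and coincides with the paper's Proposition~2.1.
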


\par
Finally, we consider the special case of ``thin" $2$-spheres. More
precisely, we provide 
upper bounds for the lengths of three simple periodic geodesics 
in terms of $d$ and the area, which are much better than the estimates
in terms of $d$, when ${A\over d^2}$ is very small.
As an extra bonus, our estimates are for the lengths of three geodesics
that appear in classical proofs of Lyusternik-Shnirelman and correspond to the generators
of the first three homology groups of the space of simple non-parametrized
curves.
\medskip

\begin{Thm} \label{Theorem1.3}
Let $M$ be a Riemannian manifold diffeomorphic to the $2$-sphere of
area $A$ and diameter $d$. There
exist three distinct simple periodic geodesics 
on $M$ 
such that none of these geodesics has index zero.
and with lengths that do not exceed, corresondingly, $d+700\sqrt{A}$,
$2d+1400\sqrt{A}$ and $4d+2800\sqrt{A}$.
\end{Thm}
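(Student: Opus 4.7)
The plan is to produce three explicit families of simple closed curves on $M$ representing the generators of $H_1$, $H_2$, $H_3$ of the space $\Lambda^0 M$ of simple non-parametrized closed curves on $M$ (including the constant ones), with the required length bounds, and then to invoke the standard Lyusternik--Shnirelman/Birkhoff minimax machinery. Since $\Lambda^0 M$ is homotopy equivalent to $\mathbb{R}P^3$, the mod-$2$ cohomology ring is $\mathbb{Z}_2[\alpha]/(\alpha^4)$, and the three minimax values corresponding to the generators of $H_1$, $H_2$, $H_3$ will produce three distinct simple closed geodesics of positive index (see [T]).

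\medskip\noindent\textbf{Step 1 (a short sweepout of width $d+700\sqrt{A}$).}
Pick $p,q\in M$ with $d(p,q)=d$ and a minimizing geodesic $\gamma$ from $p$ to $q$. The complement $M\sm\gamma$ is a topological open disk of area at most $A$. Using the effective sweepout construction of [LNR], I would build a one-parameter family of simple arcs based at points of $\gamma$ that sweep out $M\sm\gamma$, each arc having length at most $700\sqrt{A}$. Concatenating each such arc with the appropriate subarc of $\gamma$ and closing up produces a continuous family $\{c_t\}_{t\in[0,1]}\subset\Lambda^0 M$ with $c_0=c_1=\{p\}$, realizing the generator $\alpha\in H_1(\Lambda^0 M;\mathbb{Z}_2)$, and with $\max_t\operatorname{length}(c_t)\le d+700\sqrt{A}$. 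Running the classical Birkhoff curve-shortening minimax on $\{c_t\}$ produces a simple closed geodesic $\gamma_1$ of positive index with the first stated length bound.

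\medskip\noindent\textbf{Step 2 (two- and three-parameter families by doubling).}
Given a one-parameter sweepout of width $L:=d+700\sqrt A$, I would produce a two-parameter family $\{c_{s,t}\}$ whose members are obtained, after a small generic perturbation to restore simplicity, as symmetric differences of pairs of curves drawn from $\{c_t\}$; this family represents $\alpha\smile\alpha\in H_2$ and each member has length $\le 2L$. Iterating the doubling once more yields a three-parameter family of simple closed curves representing $\alpha\smile\alpha\smile\alpha\in H_3$ with maximal length $\le 4L$. Applying the Lyusternik--Shnirelman minimax to each of these families produces two further simple closed geodesics $\gamma_2,\gamma_3$ of positive index, with lengths bounded by $2d+1400\sqrt{A}$ and $4d+2800\sqrt{A}$ respectively. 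The distinctness of $\gamma_1,\gamma_2,\gamma_3$ follows because the three cohomology classes $\alpha,\alpha^2,\alpha^3$ are non-zero and of different degree, via the classical argument (any level-preserving deformation that eliminated one critical value would kill the corresponding class).

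\medskip\noindent\textbf{Main obstacle.} The serious technical work lies in Step~1: realizing a simple-curve sweepout of $M$ whose width is truly $d+C\sqrt A$ with $C\le 700$. The filling estimates from [LNR] give sweepouts of disks by short curves, but they need to be adapted so that (i) the loops remain simple on $M$ after being closed up through $\gamma$, and (ii) the resulting $1$-cycle in $\Lambda^0 M$ represents the generator of $H_1$. A further, smaller, obstacle in Step~2 is ensuring that the symmetric-difference construction outputs simple curves throughout the parameter space, which I expect to handle by a transversality/perturbation argument together with a Birkhoff-type regularization.
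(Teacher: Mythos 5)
Your high-level architecture matches the paper's: build a one-parameter ``meridional'' family of simple arcs of width $L\approx d+700\sqrt A$, assemble from it families of widths $L$, $2L$, $4L$ representing the three Lyusternik--Shnirelman classes, and run minimax. But Step~1 has a genuine gap in exactly the place you flag as the main obstacle, and the missing idea is not just technical adaptation of [LNR] --- it is a different decomposition of $M$. If you cut $M$ along a minimizing geodesic $\gamma$ from $p$ to $q$, the resulting disc has boundary of length $2d$ and $d_D$ (the maximal distance from the boundary) can be of order $d$; the relevant [LNR] estimate (Theorem 1.6~C) for path homotopies in a disc is $2\,\mathrm{length}(\partial D)+686\sqrt A+2d_D$, which in your setting is of order $6d+686\sqrt A$, not $700\sqrt A$. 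There is no [LNR] statement giving a sweepout of such a disc by arcs of length $O(\sqrt A)$ independent of $d$. What the paper actually does is: take $p,q$ realizing the diameter, use the coarea inequality to find a level set of $\mathrm{dist}(\cdot,p)$ at radius $r$ with $|r-d/2|\le\frac{\sqrt3}{2}\sqrt A$ whose total length is at most $\frac{\sqrt3}{3}\sqrt A$, take the component $\gamma$ separating $p$ from $q$, and apply Theorem 1.6~C to each of the two discs $D_1\ni p$, $D_2\ni q$. Each disc then has boundary of length $O(\sqrt A)$ and $d_{D_i}\le d/2+O(\sqrt A)$ (proved by a triangle inequality along a minimizing geodesic $pq$), so the [LNR] bound becomes $d+700\sqrt A$, and gluing the two path homotopies along two points of $\gamma$ yields the meridional slicing. (A separate case $d\le\sqrt3\sqrt A$ is handled directly by the [LNR] sphere theorem.) Without the short separating curve your coefficient on $d$ blows up.

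Step~2 is also under-specified compared with what the paper needs. The ``symmetric difference'' heuristic is morally how the paper's $2$-cycle is built (each curve is a union of two meridians, i.e.\ the boundary of the lune between them), and the $3$-cycle consists of curves made of four meridian arcs, so the length doubling $L\to 2L\to 4L$ is right. But symmetric differences of simple closed curves are in general disconnected and non-simple, and --- more importantly --- you must prove that the resulting families are homologous to the standard Lyusternik--Shnirelman cycles $z_2$ and $z_3$ in $H_*\bigl((\Pi S^2,\Pi_0 S^2);\mathbb{Z}_2\bigr)$; for the $3$-dimensional class this is the content of the paper's Section~2.3 (a M\"obius-band parametrization of pairs of meridian-digons, a perturbation at the poles to restore simplicity, and an explicit homotopy to the family of all round circles). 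Asserting that the family ``represents $\alpha\smile\alpha$'' does not substitute for this construction, and the distinctness/positive-index conclusion you want is exactly what the paper's Proposition~2.2 packages once the homology classes are identified.
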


We believe that the coefficient at $d$ in the first of these
estimates is optimal as evidenced by the example of a thin three-legged
starfish with tentacles of equal size. The example of a thin
long ellipsoid of revolution can be used to easily {\it prove} that $2$
is the optimal coefficient at $d$ in the second estimate. Also, one can try
to use the ideas of F. Balacheff, C. Croke and M. Katz from [BCK] , where
they constructed $2$-spheres where all non-trivial periodic geodesics were
longer than $2d$ by introducing a mild asymmetry to the Riemannian metric.
It seems very plausible that an application of the same idea to long thin
ellipsoids of revolution will result in Riemannian metrics on the
$2$-sphere, where the length of the second shortest simple periodic geodesic
is strictly greater than $2d$, thus, justifying the appearance of the $O(\sqrt{A})$
term. We believe that an $O(\sqrt{A})$ term is also required
in the first estimate. On the other hand, the coefficient $4$ in the estimate for the length
of the third shortest simple periodic geodesic on a ``thin" 
$2$-sphere is, most probably, not optimal.

\par

\section{Geometric realization of cycles from the proof of Lyusternik
and Shnireman.}

{\bf 2.1. A brief review of a classical proof of the existence of three
simple periodic geodesics}
The existence of three simple periodic
geodesics on every Riemannian $2$-sphere was first
proven by L.A. Lyusternik and L.G. Shnirelman ([LS], [Ly]).
They considered the space $\Pi M$ of non-parametrized
curves on a $2$-dimensional Riemannian manifold $M$ diffeomorphic
to $S^2$, as well as its subset $\Pi_0 M$ that consists of 
all constant curves (and, thus, can be identified with $M$).
Following [T] we consider the three relative homology classes
of the pair $(\Pi S^2,\Pi_0 S^2)$ with coefficients
in $\mathbb{Z}_2$, where we regard $S^2$ as the
unit round sphere in $\mathbb{R}^3$: 1) The $1$-dimensional class represented
by the relative $1$-cycle $z_1$ formed
by all circles on $S^2$ in planes parallel to the $XZ$-plane in the ambient $\mathbb{R}^3$ (see Fig. ~\ref{LS3});
2) The $2$-dimensional class represented by the relative cycle $z_2$ formed
by all circles in planes parallel to the $Z$-axis; and 3) The $3$-dimensional
class represented by the relative $3$-cycle $z_3$ formed by all round
circles on the sphere (including points regarded as ``degenerate" circles).
Lyusternik and Shnirelman described a curve shortening flow
in $\Pi M$.

\begin{figure}[center] 
\includegraphics[scale=0.3]{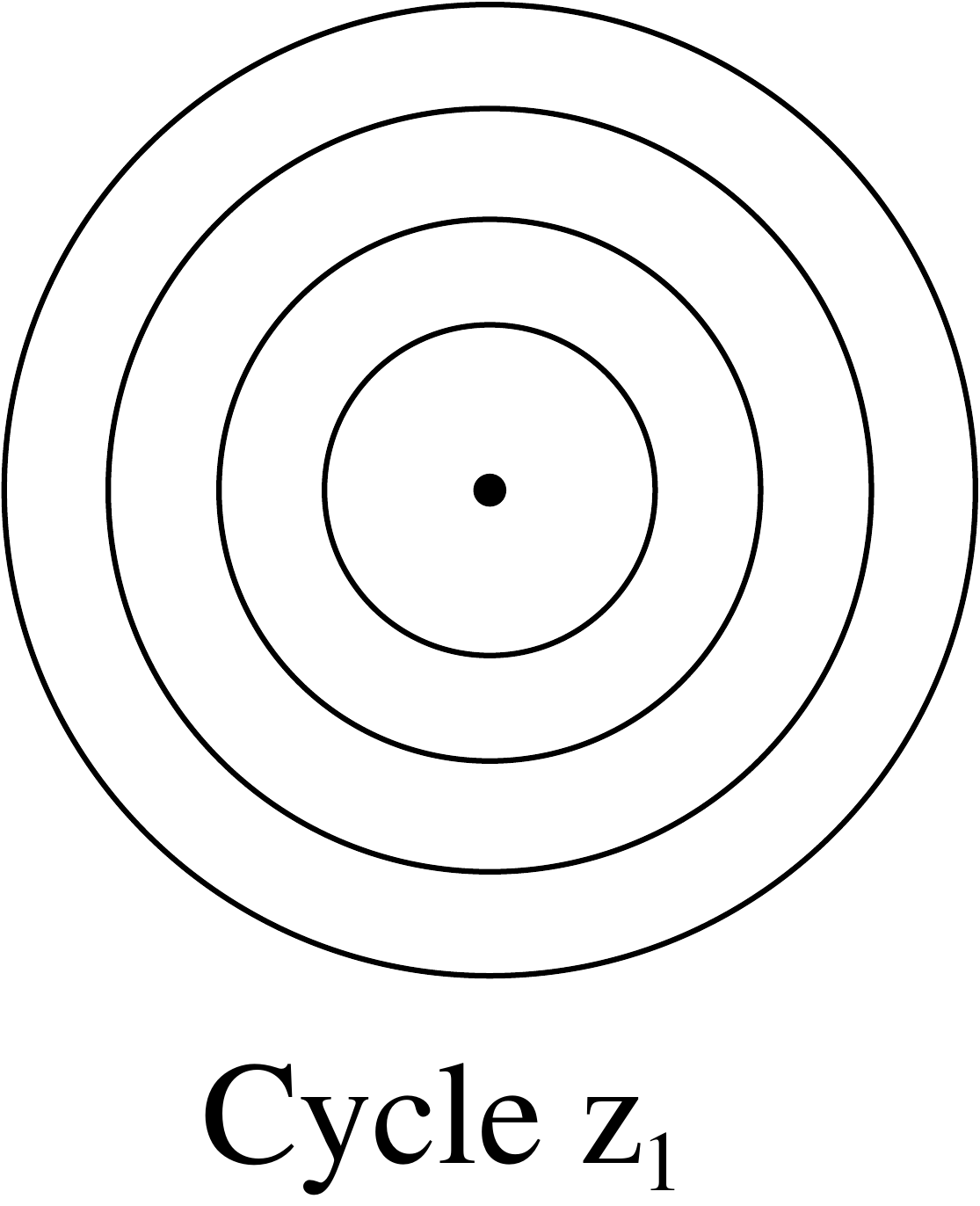} 
\caption{}
\label{LS3}
\end{figure}

They observed that this flow ``gets stuck"
on critical points representing simple periodic geodesics, when applied to $z_1$, $z_2$, $z_3$, and proved that if two of these three
geodesics coincide, then there is a whole critical level with a $1$-parametric
set of distinct simple periodic geodesics. Some errors in the construction 
of their curve shortening flow had been later corrected by W. Ballman
([B]), J. Jost ([J]) and I. Taimanov ([T0], [T]). Alternatively, one can 
prove the existence of three simple periodic geodesics using either
the curvature flow and Grayson's theorem ([Gr]) or an especially
simple curve shortening flow constructed by J. Hass and P. Scott ([HS]).
\par
An immediate corollary of any of those versions of
the proof of Lyusternik-Shnirelman
theorem is the following assertion:

\begin{Pro} \label {Proposition2.1}  Let $u_i$, $i=1,2,3$ be relative singular cycles of $(\Pi M, \Pi_0 M)$ with coefficients in $\mathbb{Z}_2$
such that $u_i$ is homologous to $z_i$ for each $i$. Assume that the
lengths of simple closed curves on $M$ in the image of each simplex of $u_i$
are bounded by $L_i$. 
Then there exist three distinct simple geodesics
on $M$ with lengths not exceeding $L_1,L_2$ and $L_3$, correspondingly.
Moreover, here we can also assume that these three simple geodesics
have positive
indices, i.e. none of them is a local minimum of the length functional on the space of simple closed curves on $M$.
\end{Pro}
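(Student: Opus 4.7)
The plan is to reduce the proposition to the classical Lyusternik--Shnirelman minimax principle, invoked separately for each of the three homology classes $[z_i]$. For each $i$, set
$$c_i \;=\; \inf_{u \sim z_i} \; \max_{\sigma \in u} \; \max_{c \in \sigma, \ c \text{ simple}} \operatorname{length}(c),$$
where the infimum runs over relative singular cycles $u$ of $(\Pi M, \Pi_0 M)$ homologous to $z_i$. By the hypothesis on $u_i$, we have $c_i \leq L_i$. It therefore suffices to show that $c_i$ is attained by a simple periodic geodesic, that we can select three distinct such geodesics across $i=1,2,3$, and that none is a local minimum.

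To produce a critical point at level $c_i$, I would apply to $u_i$ a curve-shortening process on $\Pi M$ that preserves simplicity of simple closed curves and strictly decreases length away from critical points. Any of the flows mentioned earlier in the paper (Ballmann's/Jost's/Taimanov's correction of the Lyusternik--Shnirelman flow, the Hass--Scott flow, or Grayson's curvature flow truncated near singular times) works. Because the deformed cycle remains homologous to $z_i$ and the $z_i$ are non-trivial in relative homology, the flow cannot push the cycle entirely into $\Pi_0 M$; hence it must accumulate on a simple periodic geodesic $\gamma_i$ of length exactly $c_i \leq L_i$. This is the standard deformation argument from the classical proofs referenced in the excerpt, and I would invoke it essentially as a black box.

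For distinctness, the key input is the Lyusternik--Shnirelman multiplicity theorem: if $c_i = c_j$ for some $i \neq j$, then the critical set at that level is infinite and in fact contains a $1$-parameter family of distinct simple periodic geodesics (this is the step whose original proof was gapped and later repaired by Ballmann, Jost, and Taimanov in the cited references). In either case — three distinct critical levels, or a coincidence producing a continuum — we extract three distinct simple periodic geodesics with the required length bounds.

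The assertion about positive indices follows from a general fact about minimaxes over non-trivial homology classes of positive dimension: if $\gamma_i$ were a strict local minimum of length among simple closed curves, then a small neighborhood of $\gamma_i$ in $\Pi M$ would deformation retract onto $\gamma_i$, allowing us to push $u_i$ strictly below level $c_i$ while preserving its homology class, contradicting the definition of $c_i$. I expect this last verification to be the most delicate point, because one must argue within the (non-smooth) space of simple non-parametrized curves and use the fact that the curve-shortening flow can be localized near $\gamma_i$; but once the preceding steps are granted, it is a routine consequence of minimax theory, and the statement already appears (for the $z_i$ themselves) in Taimanov's treatment [T].
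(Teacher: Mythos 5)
Your proposal is correct and follows essentially the same route as the paper, which simply records Proposition 2.1 as an immediate corollary of the classical Lyusternik--Shnirelman machinery (the corrected curve-shortening flows of Ballmann/Jost/Taimanov, or Hass--Scott, or Grayson) together with the multiplicity theorem for coincident critical levels and the positive-index property cited from [T]. Your expansion into the explicit minimax values $c_i\le L_i$, the deformation argument, and the deferral of the index claim to [T] matches what the paper treats as standard, so nothing further is needed.
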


Now assume that $f:S^2\longrightarrow M$ is a diffeomorphism
from the standard round sphere $S^2$ of radius $1$ in $\mathbb{R}^3$ onto $M$
that sends each meridian connecting the South pole $(0,0,-1)$ and the
North pole $(0,0,1)$ of $S^2$ into a curve of length $\leq L$.
Fix a (small) positive $\delta$.
 We would like to describe $1$-, $2$-, and $3$-dimensional relative
cycles $u_1$, $u_2$, $u_3$  in $(\Pi M,\ \Pi M_0)$
with coefficients in $\mathbb{Z}_2$ such that the lengths of all closed curves
on $M$ that appear as images of points of singular simplices of $u_i$
do not exceed, correspondingly, $2L+\delta$, $2L+\delta$ and $4L+\delta$. Moreover, if $L_0$
denotes the minimal length of the image of a meridian of $S^2$ under $f$,
then the lengths of closed curves in the images of simplices of $u_1$
will not exceed $L_0+L+\delta$. Here is the description of the
first two $u_1$ and $u_2$:
\par
{\bf 2.2. The cycles $u_1$ and $u_2$.} The cycles $u_i,\ i=1,2,3$ will be the images under the map induced by $f$
of the following cycles $v_1, v_2, v_3$ in $(\Pi S^2,\Pi S^2_0)$:
Assume that $m_0$ is a meridian such that the length of $f(m_0)$ is the minimal
length of the image under $f$ of a meridian of $S^2$. Then $v_1$
is constructed as follows. Denote
a meridian that forms an angle $\alpha$ with $m_0$ by $m_\alpha$.
Consider a $1$-dimensional family of simple curves formed by $m_\alpha$
and $m_{-\alpha}$, where $\alpha$ runs through the interval
$(\delta, \pi-\delta)$, where we will make a positive $\delta$ very small.
Now contract $m_\delta\bigcup m_{-\delta}$ through simple closed curves
that go along $m_{t\delta}$ and $m_{-t\delta}$ from the parallel with
lattitude $({\pi/2})t$ in the Southern hemisphere to the parallel with the
lattitude $({\pi/2})t$ in the Northern hemisphere and connects
between $m_{t\delta}$ and $m_{-t\delta}$ along these two parallels (see
Fig. ~\ref{LS5}).
Here $t$ varies between $0$ and $1$. When $t\longrightarrow 0$ these curves
converge to a point corresponding to the value $t=0$. Similarly, we can
contract $m_{-(\pi-\delta)}\bigcup m_{\pi-\delta}$. The resulting relative
$1$-cycle will be $v_1$.

\begin{figure}[center] 
\includegraphics[scale=0.3]{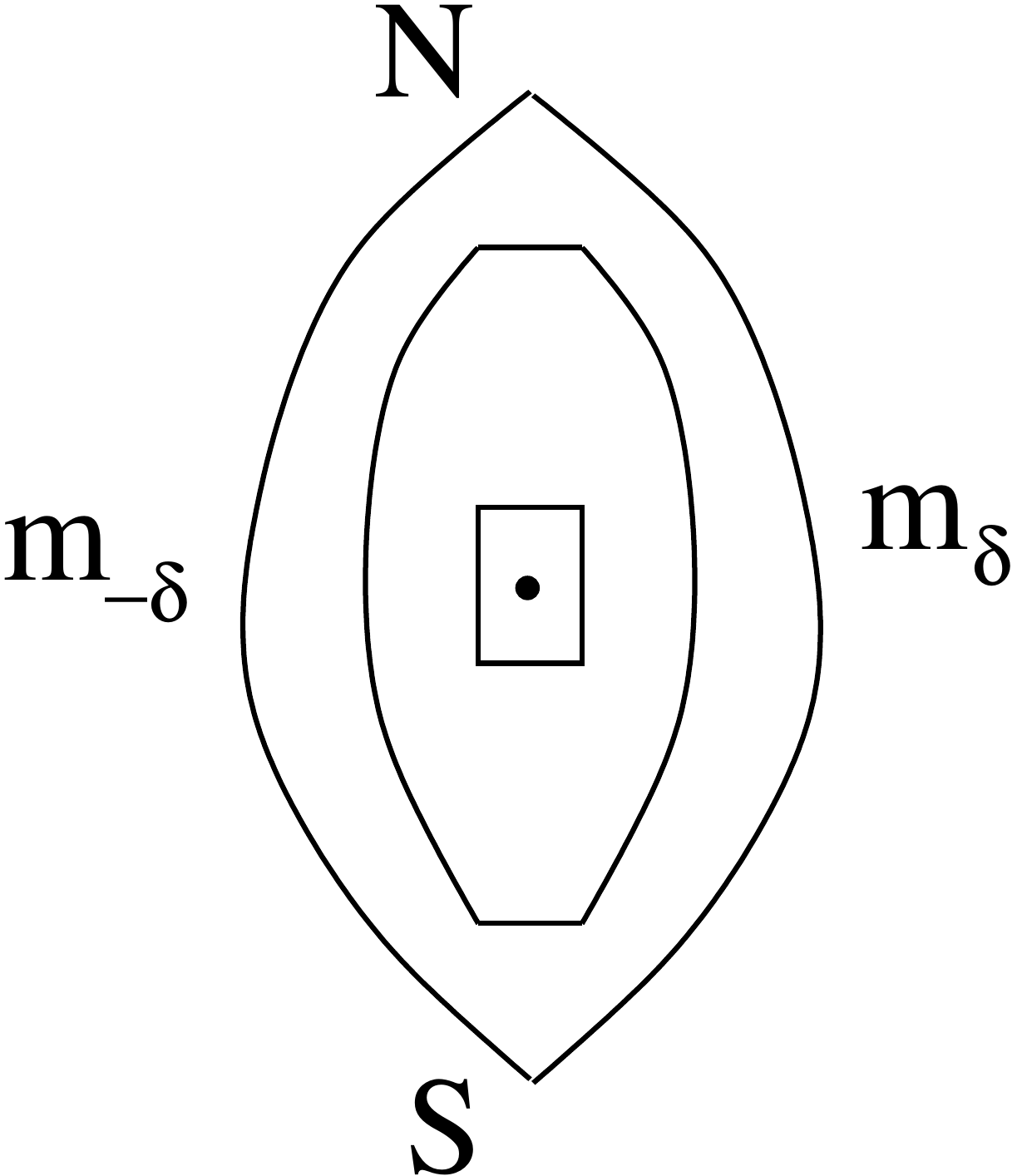} 
\caption{}
\label{LS5}
\end{figure}

\par
Note that we obtain a homologous cycle if instead of $m_\alpha\bigcup m_{-\alpha},\ \alpha\in [-\pi,\pi]$ we consider $m_0\bigcup m_\alpha$, $\alpha\in [0, 2\pi]$. If $m_0$
is the shortest meridian, than the maximal length of curves in these family can be less
than in the family $m_\alpha\bigcup m_{-\alpha}$.
\par
The 2-dimensional relative cycle $v_2$ is defined
by rotating $v_1$ by the angle $\pi$: For each $\theta\in [0,\pi]$
we take $m_\theta$ instead of $m_0$ and define a similar path in $\Pi S^2$ starting
and ending in $\Pi_0 S^2$. Denote the resulting relative
$1$-cycle by $v_{1t}$. Note that $m_\pi=m_{-\pi}$. Therefore,
$v_{1\pi}=v_{10}=v_1$. (Here it is important that the curves in $\Pi M$
are non-parametrized, and
the ring of coefficients is $\mathbb{Z}_2$. Therefore, when
we change the orientation of the singular $1$-simplex in $(\Pi S^2,\Pi S^2_0)$
we obtain the same $1$-cycle.)
\par
It had been observed in [NR] that $v_1$ is homologous to $z_1$, and $v_2$ is homologous
to $z_2$, where $z_1$ and $z_2$ were defined in section 2.1. 
 Indeed, in order to see that $v_1$ is homologous to $z_1$, observe that
the maps of $[0,1]$ to $\Pi S^2$ representing these $1$-cycles are homotopic to each
other relatively to $\{0, 1\}$. The homotopy can be described as follows: After a
reparametrization of these paths and assuming that $m_0$ passes through $(0,-1,0)$ we can assume that
the circle $C_t$ corresponding to the image of each point $t\in (0,1)$ under the map representing
$z_1$ is tangent to $m_{t\pi}$  and $m_{-t\pi}$, and is contained inside the
digon $D_t$ formed by these two 
meridians and containing $m_0$ inside (see Fig. ~\ref{LS4}).

\begin{figure}[center] 
\includegraphics[scale=0.3]{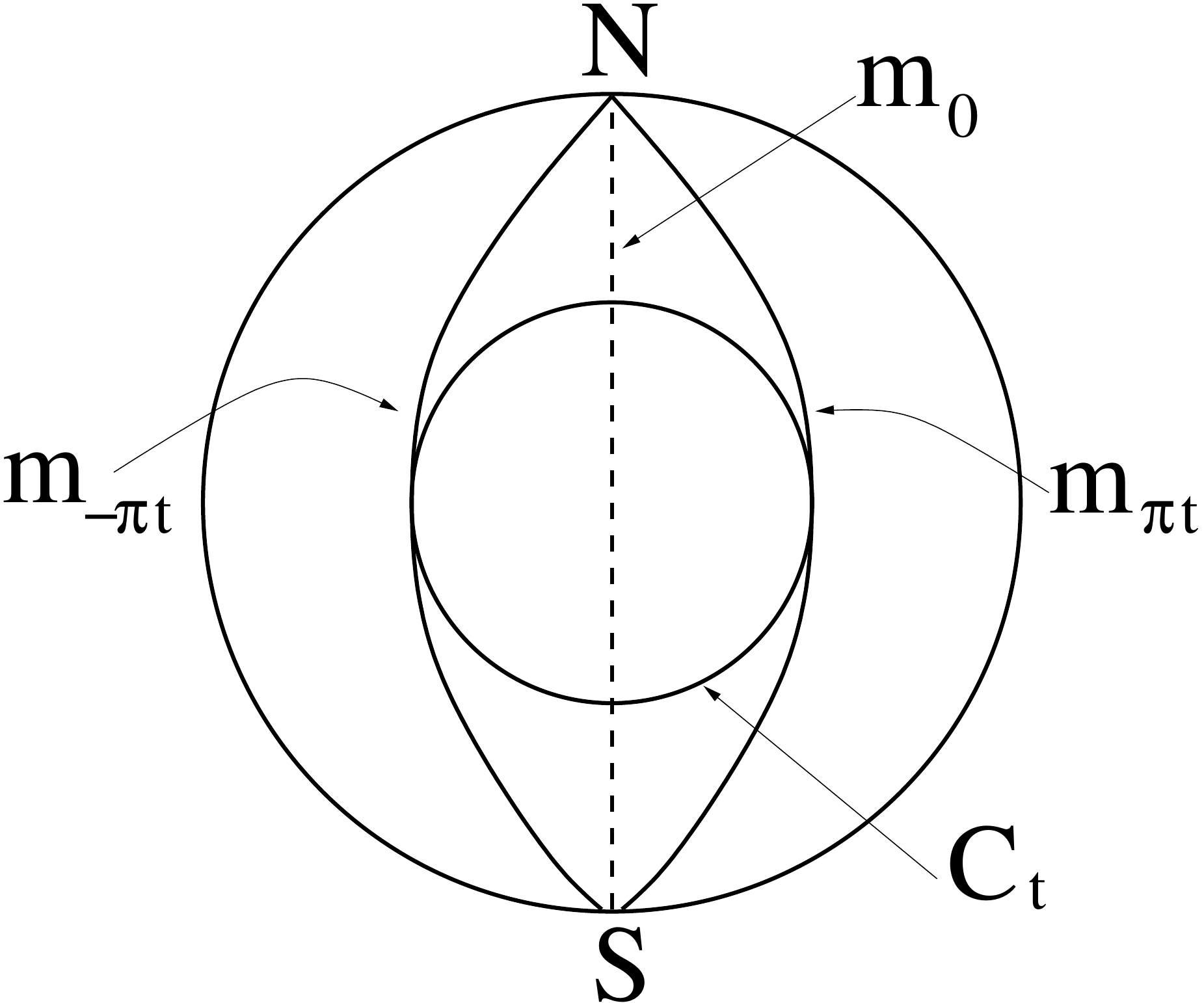} 
\caption{}
\label{LS2}
\end{figure}

For each $t$ the homotopy is merely a homotopy between $D_t$ to $C_t$ via simple curves that obviously can be made continuous with respect to $t$ (see
Fig. \ref{LS4}). Rotating this homotopy, we will obtain homotopy between the maps representing $2$-cycles $v_2$ and $z_2$.   

\begin{figure}[center] 
\includegraphics[scale=0.3]{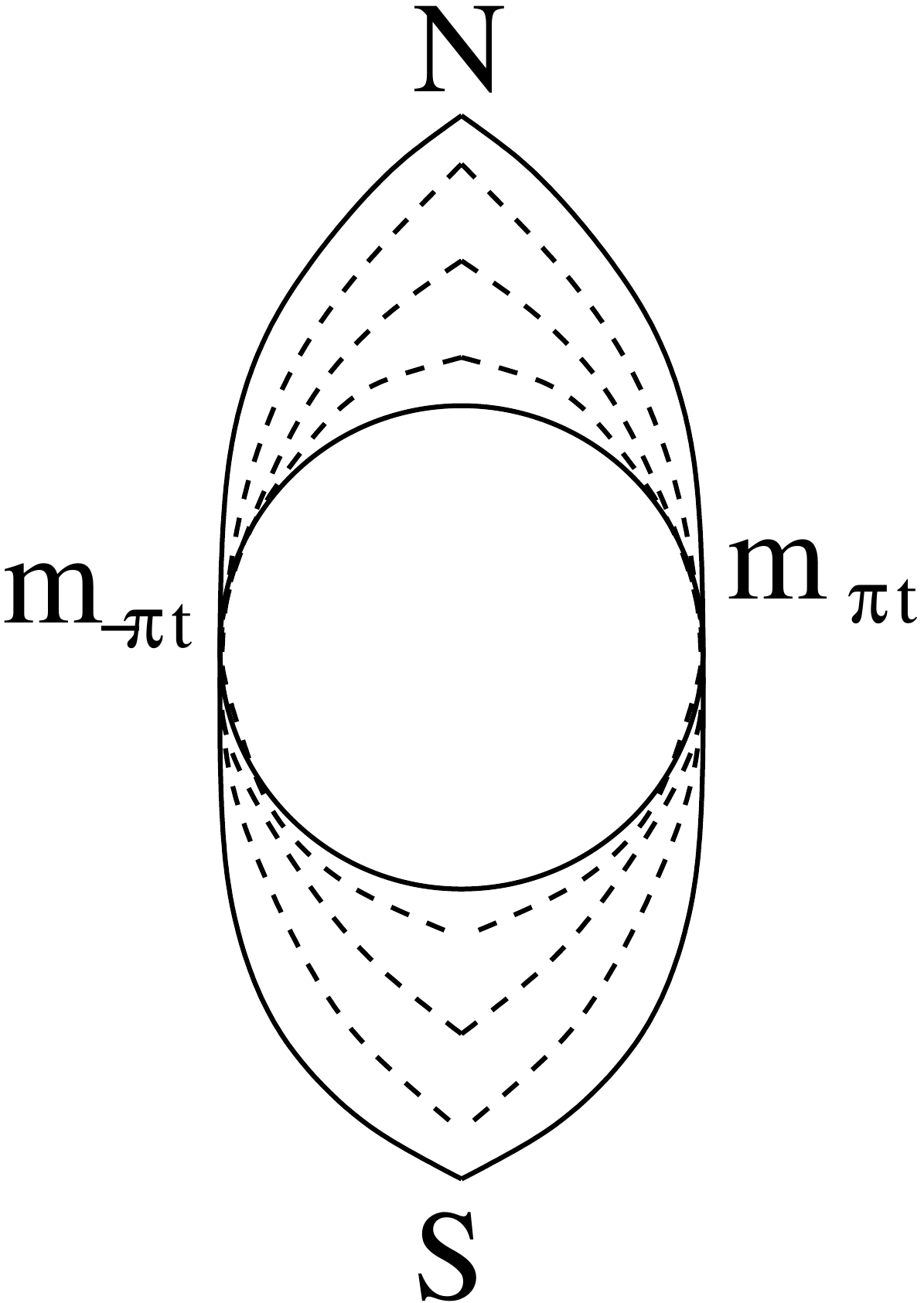} 
\caption{}
\label{LS4}
\end{figure}

{\bf 2.3. Cycles $v_3$ and $u_3$.} Again, $u_3$ will be the image
under the homomorphism induced by $f$ of a cycle $v_3\in Z_3(\Pi S^2,\ \Pi_0 S^2;\mathbb{Z}_2)$ that we are going to describe below.

Let $P$ denote all planes in $\mathbb{R}^3$
at a distance $\leq 1$ form the origin. 
Then $z_3 = \{p \cap S^2 | p \in P \}$ .
In other words, $z_3 \subset \Pi S^2$ is a family of all circles and points on $S^2$. 
Let $[z_3]$ denote the homology class of $z_3$ in $H_3((\Pi S^2,\Pi_0 S^2), \mathbb{Z}_2)$.

Suppose that a diffeomophism $f: S^2 \rightarrow (S^2,g)$ maps
each meridian to a curve of length $\leq L$ on $(S^2,g)$.
 We will construct a family $X \subset \Pi S^2$ of closed
curves, such that 1) $X$ will constitute a $3$-dimensional
relative cycle in $(\Pi S^2,\Pi_0 S^2;\mathbb{Z}_2)$;
2) $X \in [z_3]$ and the length of 
 $f(\gamma)$ does not exceed $\leq 4L +\epsilon$ 
 for every $\gamma \in X$. Then we define $v_3$ as the relative cycle
corresponding to $X$. In fact, below we sometimes will slightly
abuse this notation by writing $X$ for $v_3$ and considering
$X$ as an element of $Z_3(\Pi S^2,\Pi_0S^2;\mathbb{Z}_2)$.
 
 First, we will give a brief informal description of the construction of $X$.
 Consider a $1$-parametric family of planes in $\mathbb{R}^3$
 that pass through the South pole $S$ and contain
 some fixed line $l$ in the tangent space of $S^2$ at $S$.
 They intersect $S^2$ in a family of circles. Note that
these circles form a $1$-cycle homologous to $v_1$.
 Notice that this family of circles can be isotoped 
 to a family of curves each consisting of two 
 meridians on $S^2$. 
 On the other hand, consider a 2-parametric family of circles on $S^2$ that lie
 on a plane that is parallel to $l$.
Each circle $C$ in this family 
 can be sandwiched between two planes containing $l$
 and intersecting $C$ tangentially at its highest and lowest points.
(If the circle is in a horizontal plane, then the two points
of intersection are at the same height. However, even in this case the pair
of points of intersection is uniquely defined.)
 This defines a bijective correspondence between all circles in planes that 
 are parallel to $l$ and all pairs of circles on $S^2$
 that pass through $S$ and are tangent to $l$ at $S$ (see Fig. ~\ref{LS10}).

\begin{figure}[center] 
\includegraphics[scale=0.3]{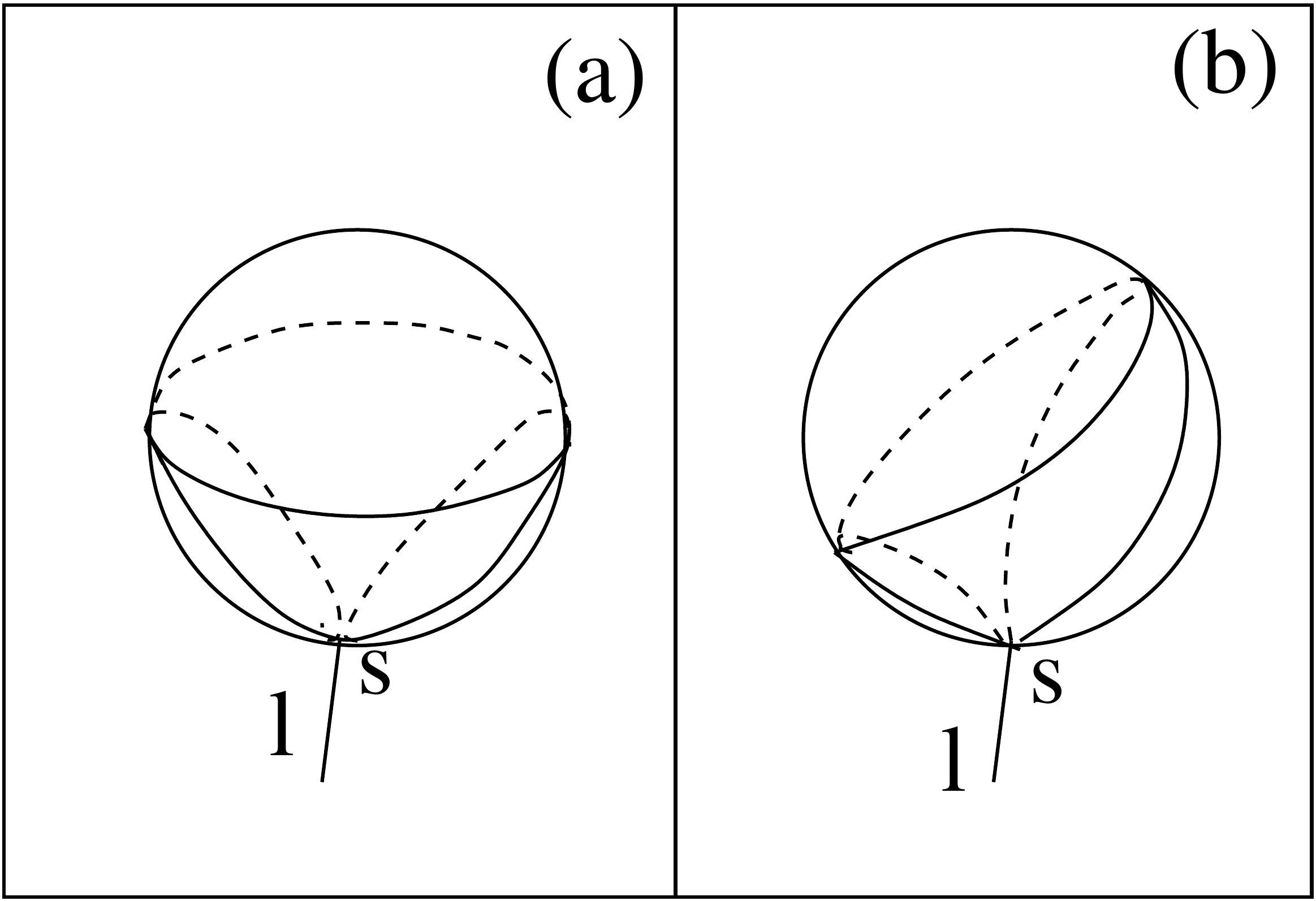} 
\caption{}
\label{LS10}
\end{figure}

 These pairs of circles in turn can be homotoped to a
 family of curves made out of four arcs of meridians (similarly to how it
was done in section 2.2).
 As we rotate $l$ by $\pi$, circles parallel to $l$ will yield the 
3-parametric family of all circles on $S^2$. We have a surjective continuous map from the set of all pairs of circles passing through $S$ and having a common
tangent at $S$ to the family of all circles on $S^2$. This map fails
to be injective only for horizontal circles on $S^2$, each of which will
have a $1$-parametric family of inverse images. Therefore, the cycle formed
by all considered pairs of circles will be homologous to $v_3$. Furthermore,
the cycle formed by all closed curves that go along four meridians
that form two pairs with common bissectoral planes is also homologous
to $v_3$. Finally, one can slightly perturb curves in this last cycle
to make them nonself-intersecting without significantly changing their length.
 
{\bf Cycle X of short curves.} Now we are going to give
a more detailed description of $X$.
We will construct a cycle $X (=v_3)$, such that each closed curve in $X$ consists
(up to a small perturbation) of at most four subarcs of the meridians on $S^2$.
This implies the above estimate.

We will describe the construction of $X$ in two steps. First, let us define a chain $X'$,
which has a non-empty boundary and curves with self-intersections where
the curve can touch itself but cannot cross itself.
Then we perturb the family so that it only consists of simple curves and connect the boundary 
through simple curves of bounded length to a $2$-cycle in $\Pi_0S^2$.
The image of this homotopy will be attached to the perturbed $X'$, and the
result will be $X$ that can now be regarded  as a relative $2$-cycle
in $(\Pi S^2,\Pi_0S^2)$.

\noindent
\textbf{Step 1. Defining $X'$}

Fix a meridian $m$ parametrized by the arclength.
Let $m^{\phi}$ denote a meridian making an angle $\phi$ with $m=m^0$,
$\phi \in [-\pi, \pi]$. For $t \in [0, \pi]$ define a family of closed curves

\[ \gamma_t ^m = \left\{ 
  \begin{array}{l l}
    m^0|_{[0,3t]} \cup -m^0|_{[0,3t]} & \quad \text{for $t \in [0, \frac{\pi}{3}]$}\\
    m^{3t - \pi} \cup -m^{\pi - 3t} & \quad \text{for $t \in [\frac{\pi}{3}, \frac{2 \pi}{3}]$}\\
    m^{\pi}|_{[0,-3(t-\pi)]} \cup -m^{\pi}|_{[0,-3(t-\pi)]} & \quad \text{for $t \in [\frac{2 \pi}{3}, \pi]$}
  \end{array} \right.\]

We identify the point curves $\gamma_0 ^m $ and $\gamma_{\pi} ^m$.
We perform a small perturbation that makes curves $\gamma_t$ 
simple and disjoint everywhere except at the South Pole.
The perturbation is illustrated in Figure \ref{fig:gamma_family}.
After the perturbation, $\gamma_{\pi/2} ^m$ is the big circle that intersects
South and North pole and is perpendicular to the meridian $m$.
For each value of the $z$ coordinate for $-1 < z < 1$
there are exactly two curves $\gamma_{\phi} ^m$ and $\gamma_{\pi-\phi} ^m$
that are touching the $z=const$ plane at their highest point,
which we will denote by $a(\phi)$
Moreover, curves $\gamma_{\phi} ^m$ and $\gamma_{\pi-\phi} ^m$ are mirror images of each other
reflected along $\gamma_{\pi/2}$.

\begin{figure}[center] 
\includegraphics[scale=0.4]{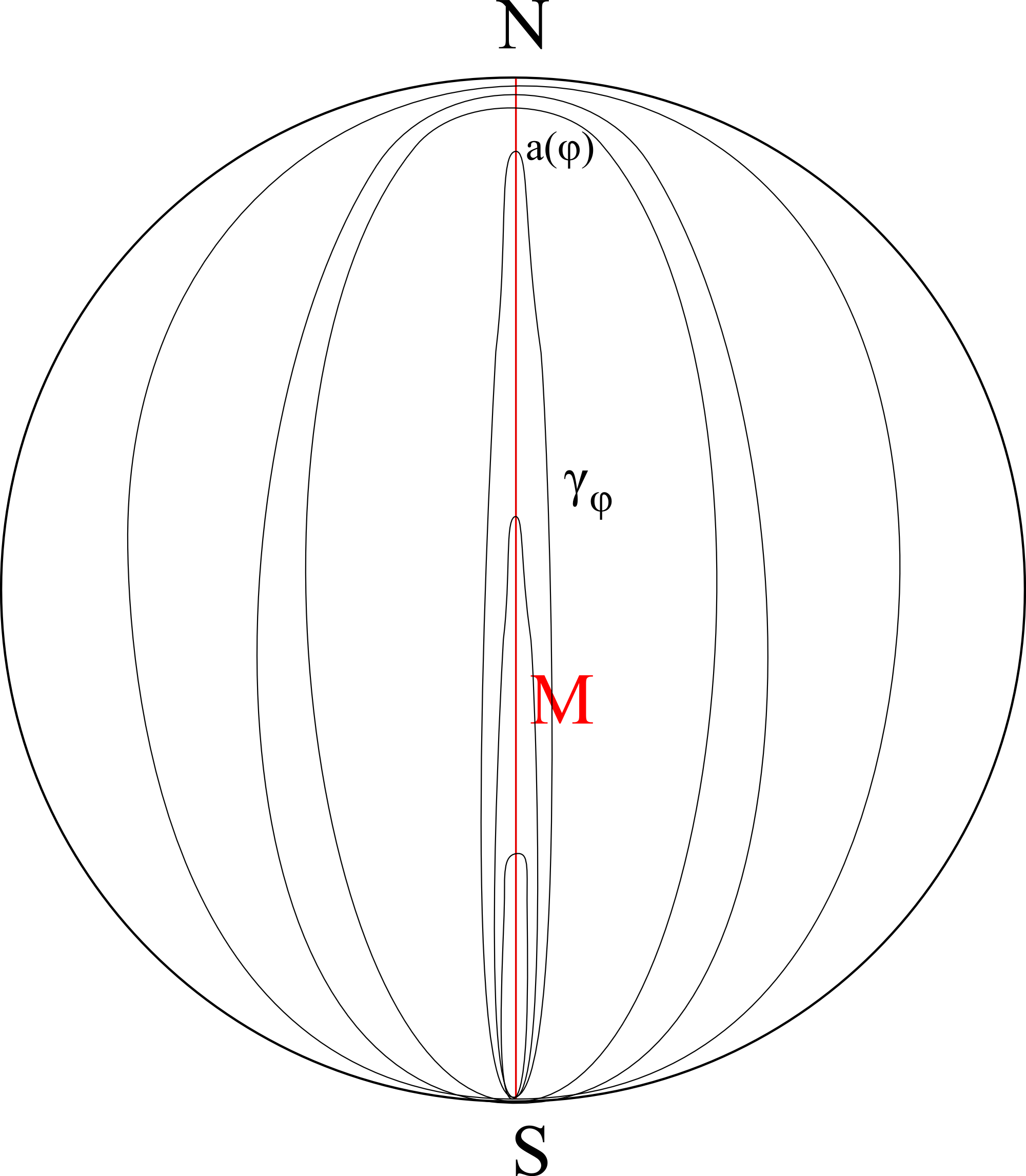} 
\caption{One-parametric family $\gamma_t$} 
\label{fig:gamma_family}
\end{figure}
  
Consider triangle $T= \{(\phi_1,\phi_2)| \phi_1 \geq \phi_2\} \subset [0, \pi]^2$.
For each $(\phi_1,\phi_2) \in T$ let $\gamma_{(\phi_1,\phi_2)}=\gamma_{\phi_1}^m \cup \gamma_{\phi_2}^m$.
We connect the endpoints of $\gamma_{\phi_1}$ and $\gamma_{\phi_2}$
in such a way that after a small perturbation $\gamma_{(\phi_1,\phi_2)}$ can be made simple
(see Fig. \ref{fig:perturbation at South Pole}).

Two sides of $T$, $a=\{(x,0)| 0\leq x \leq \pi \}$ and $b=\{(\pi,y)| 0\leq y \leq \pi \}$,
parametrize the same family of curves, so we glue them together accordingly. 
Note that $(0,0)$, $(0, \pi)$ and $(\pi,\pi)$ are all identified into one point.
As a result we obtain a family of curves parametrized
by a M\"obius band (in the space $\Pi S^2$), where the boundary is formed by all
$c=\{(\phi,\phi)| \phi \subset [0, \pi]\}$,
consisting of curves $\gamma_{\phi}^m$ traversed twice.
See Figure \ref{fig:T}.

\begin{figure}[center] 
\includegraphics[scale=0.4]{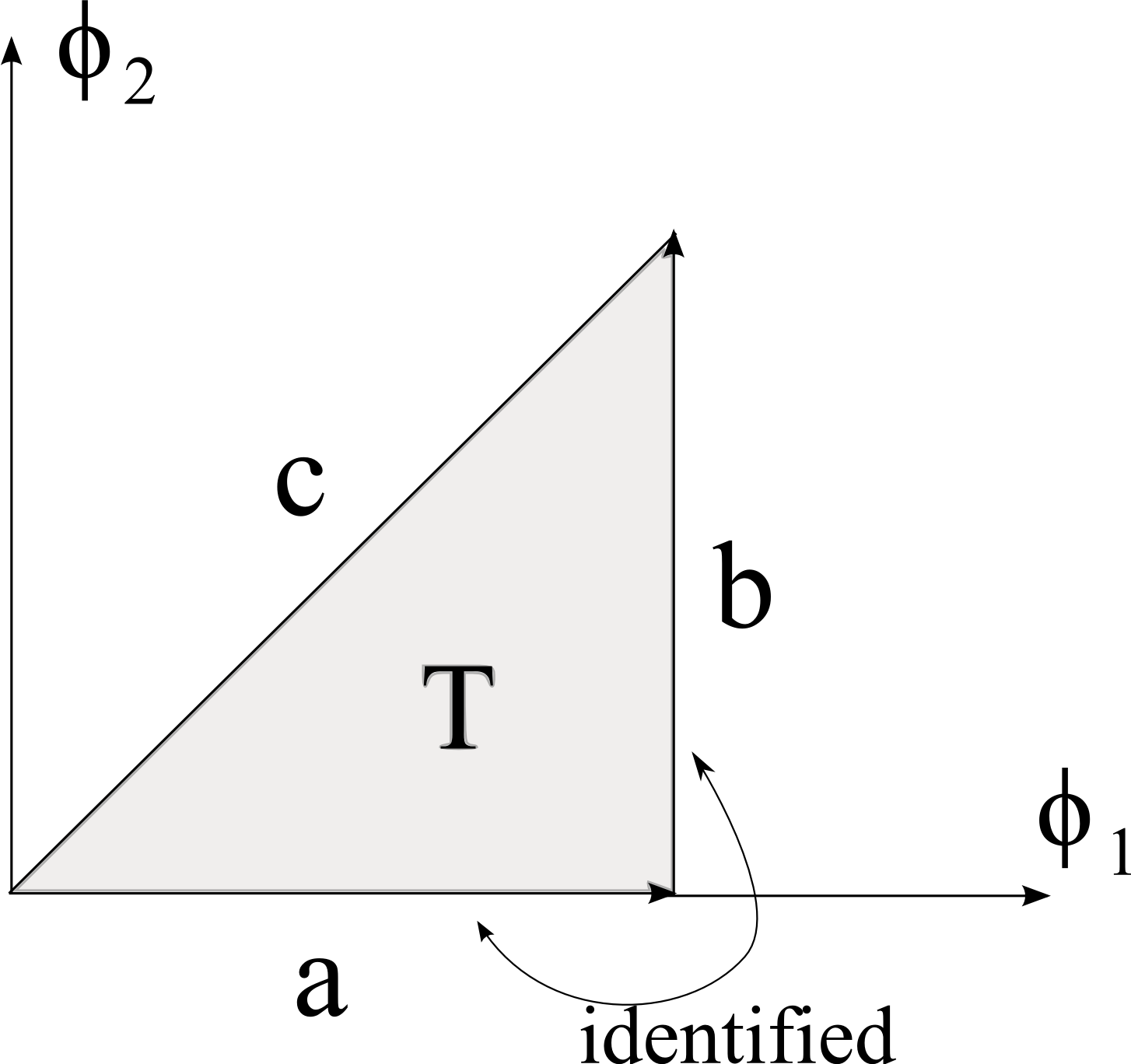} 
\caption{Triangle $T$ parametrizing family $X_m$} 
\label{fig:T}
\end{figure}

\begin{figure}[center] 
\includegraphics[scale=0.3]{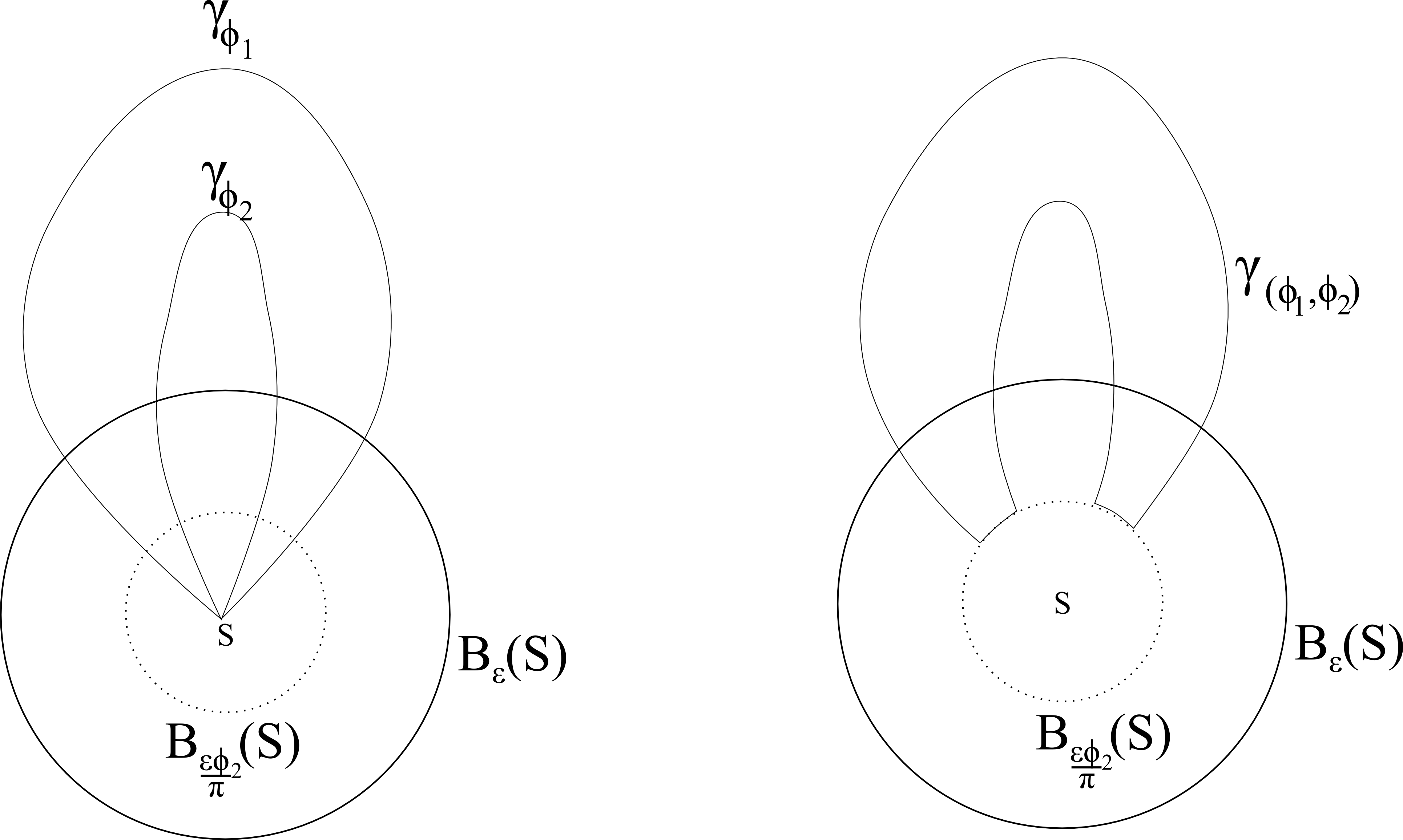} 
\caption{Perturbation of $\gamma_{(\phi_1,\phi_2)}$ at the South Pole} 
\label{fig:perturbation at South Pole}
\end{figure}

Denote by $X_m$ the family of curves parametrized by $T / a \sim b$.
Rotating $m$ by an angle $\pi$ and identifying
the curves of $X_m$ and $X_{m^{\pi}}$
we obtain a 3-parametric family $X'$.

\noindent
\textbf{Step 2. Contracting the boundary of $X'$ and removing self-intersections} 

Closed curves in $X'$ do not have transverse 
self-intersections, but some of them have arcs traced multiple times and 
non-transverse intersections at the South Pole.

First we define a perturbation that gets rid of self-intersections at the South Pole.
Choose a small $\epsilon_0-$ball $B_{\epsilon_0}(S)$ around the South Pole.
Consider $\gamma_{(\phi_1,\phi_2)} \in X_m$ ($\phi_1 \geq \phi_2$)
and let $\epsilon(\phi_2)=\frac{(z(a(\phi_2))+1) \epsilon_0}{2}$.
We replace the intersection of $\gamma_{(\phi_1,\phi_2)}$ 
with $B_{\epsilon(\phi_2)} (S) \subset B_{\epsilon_0}(S)$ by two arcs
on the boundary circle of $B_{\epsilon (\phi_2)} (S)$
as on Figure \ref{fig:perturbation at South Pole}.
One can check that this defines a continuous mapping on $X'$.

Now we get rid of double traced arcs and contract the boundary of chain $X'$.
Consider a one-parametric family of curves $L_m = \{\gamma_{(\phi_1,\phi_2)}|
\phi_1-\phi_2= \epsilon \} \subset X_m$ for some small $\epsilon>0$. 
Each curve $\gamma_{(\phi_1,\phi_2)} \in L_m$ bounds a narrow disc around $\gamma_{\theta_1}$
for $\theta_1 = \phi_1 + \epsilon/2$. We can contract each $\gamma_{(\phi_1,\phi_2)}$
to a point $a(\theta_1)$ (the point of maximal z-coordinate on $\gamma_{\theta_1}$)
in this discs through simple closed curves
as on Figure \ref{fig:contraction}. This is continuous for all families $L_m$
as we vary the meridian $m$. We replace the part of $X'$ formed by all
curves $\{\gamma_{(\phi_1,\phi_2)}\vert \phi_1-\phi_2<\epsilon\}$ by the
image of this contracting homotopy. All points $\gamma_{(\phi,\phi)}$
on the boundary
of the M\"obius band will be mapped to the trivial curves $\{a(\phi)\}$. This completes our construction of $X$.

\begin{figure}[center] 
\includegraphics[scale=0.5]{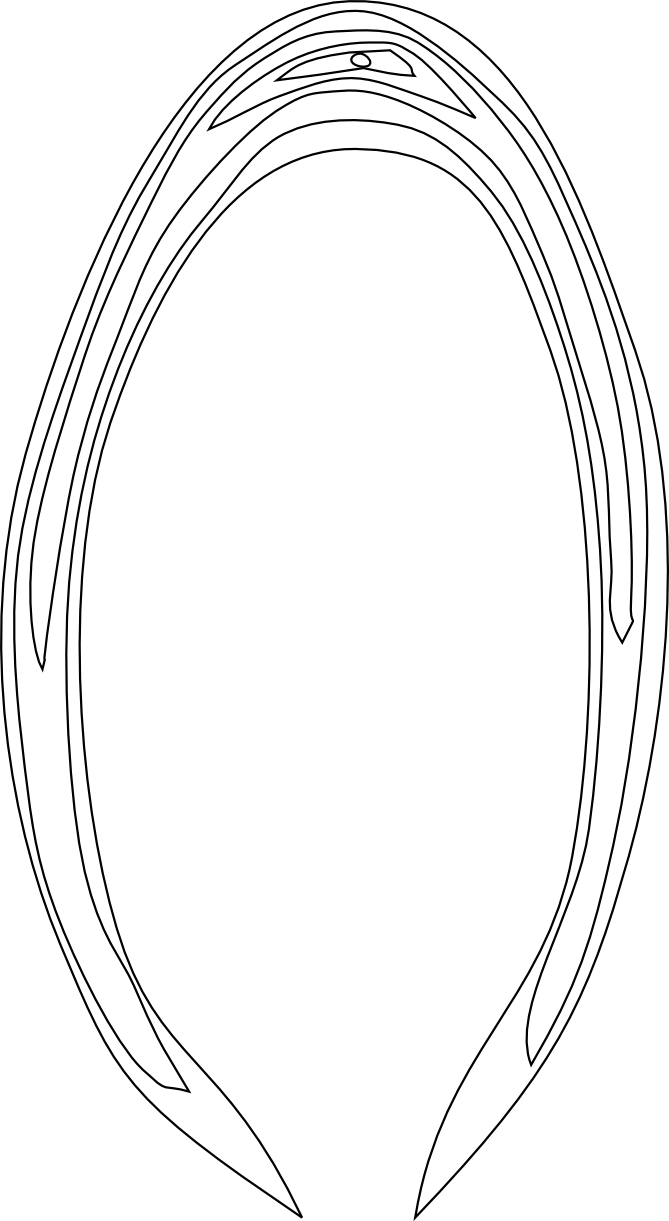} 
\caption{Contracting a curve near the boundary of $X_m$} 
\label{fig:contraction}
\end{figure}

{\bf 2.3.2 Proof that $X \in [z_3]$} We claim that $X$ represents
the same homology class in 
$H_3((\Pi S^2,\Pi_0 S^2), \mathbb{Z}_2)$ as the family $z_3$
of all circles on $S^2$. (In this subsection $z_3$ will
sometimes be regarded as a subset of $\Pi S^2$ and sometimes
as the corresponding relative $3$-cycle.)
To prove this we will construct a homotopy 
 $\sigma_t: X \rightarrow  \Pi S^2$
with $\sigma_0$ the identity map and
$\sigma_1$ coinciding with $z_3$
on a dense open subset. Fixing a CW structure on 
cycles $\sigma_1$ and $z_3$ we will observe that
the two maps coincide on the interior 
of their 3-cells. In particular, it will follow
that they represent the same homology class.

\begin{figure}[center] 
\includegraphics[scale=0.3]{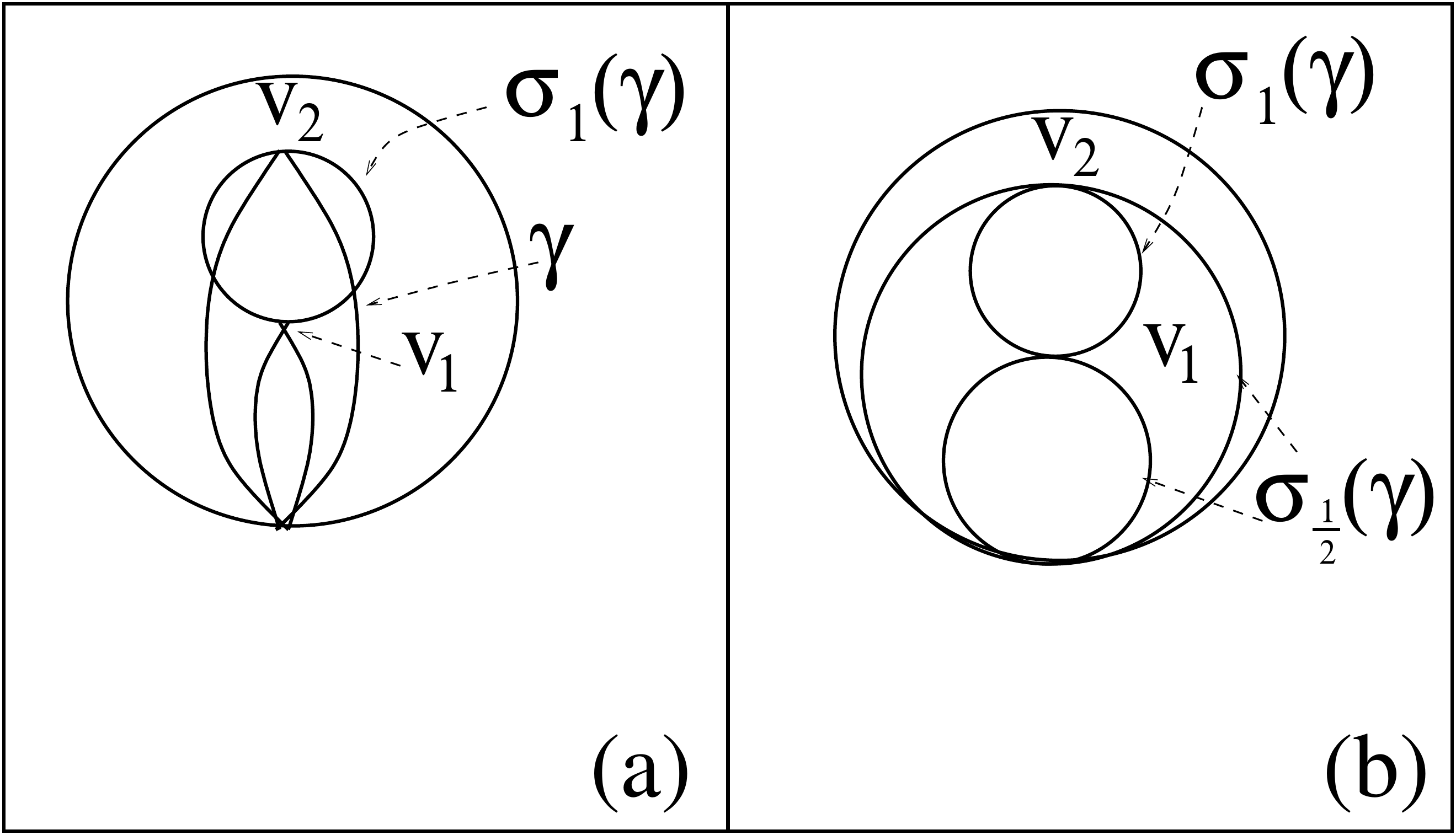} 
\caption{}
\label{LS11}
\end{figure}

\noindent
\textbf{Description of $\sigma_1(X)$}
(See Fig. ~\ref{LS11}). First, we describe $\sigma_1$.
If $\gamma \in X$ is a point curve we set $\sigma_1(\gamma)= \gamma$.
Otherwise, by construction, for each curve $\gamma \in X$
there is unique big circle $C$  that passes through the South pole
and the point, where $\gamma$ attains its maximal $z$-value. Note that $\gamma$
is symmetric under the reflection along $C$.
The curve $\gamma$ intersects $C$ at two points $v_1$ and $v_2$.
We set $\sigma(\gamma)$ to be the circle on $S^2$ that passes
through $v_1$ and $v_2$ and whose center lies on $C$.
Clearly $\sigma$ is continuous and maps $X$ onto $z_3$.
It is not injective. Note that each longitudinal 
circle has a one-parametric family of pre-images under $\sigma_1$.
 
The homotopy between the identity map on $X$ and $\sigma_1$
will be constructed in two stages. First we homotop
$X$ to a family $Y=\sigma_{1/2}(X)$ of curves obtained from two circles
that pass through the South pole and whose centres lie on the same
meridional big circle. We described this family at the beginning of
section 2.3 as a 
family of curves obtained from a pair of circles that 
have a common tangent line at the South pole.
Then we will homotop $Y$ to $\sigma_1(X)$.

\noindent
\textbf{Homotopy from $X$ to $Y$} 
Given a curve $\gamma_{\theta} ^m$ define 
$\overline{\gamma}_{\theta} ^m$ to be the circle that passes through
the South Pole and
$a(\theta)$ (the point where $\gamma_{\theta}^m$ attains its maximal $z$ value)
and whose center lies on $m \cup m^{\pi}$.
For each $\gamma_{(\phi_1,\phi_2)} \in X_m$
we set $\overline{\gamma}_{(\phi_1,\phi_2)} = \overline{\gamma}_{\phi_1} ^m \cup \overline{\gamma}_{\phi_1} ^m$.

We perform perturbations that remove self-intersections 
at the South Pole and doubly traced curves in exactly the same way 
as we described above for the family $X$.
This correspondence is a homeomorphism and it is easy to check that there
exists an isotopy from $X$ to $\sigma_{1/2}(X)=Y$ through simple curves that
gradually ``fattens" each curve $\gamma_\phi^m$ into the corresponding circle
$\overline{\gamma}_\phi^m$.

\noindent
\textbf{Homotopy from $Y$ to $\sigma_1(X)$}
 
Let $\gamma \in Y$.
 We will define a homotopy 
$\gamma_t$, $t \in [-1,1]$, that starts
on $\gamma$ and ends on $\overline{\gamma}=\sigma_1(\sigma_{1/2} ^{-1} (\gamma))$.
It will be clear from the definition that
the homotopy depends continuously
on $\gamma \in Y$.

As before let $S$ be the big circle of symmetry of 
$\gamma$ and $v_1$ and $v_2$ be points of intersection
of $\gamma$ with $S$.
Let $D_1$ be the disc of $S^2 \setminus \gamma$
that contains $\overline{\gamma}$. 
Let $D_2$ denote the disc of $S^2 \setminus \overline{\gamma}$ that does not
contain $\gamma$. Note that $D_2 \subset D_1$.

Let $a_1$ and $a_2$ be the $z-$coordinates of points $v_1$ and $v_2$
correspondingly and assume without any loss of generality that
$a_1 \leq a_2$.

For $t$ so small that the plane $\{z=t\}$
does not intersect $\gamma$ we set $\gamma_t=\gamma$.
Suppose $\{z=t\}$ intersects $\gamma$, but $t \leq a_1$.  
Let $\gamma^{\geq t}$ denote the intersection 
of the curve $\gamma$ with the halfspace $\{z \geq t\}$.
Let $\alpha_{t}$ denote the intersection of the disc
$D_1$ with the plane $\{ z = t \}$. Note that
$\alpha_{t}$ consists of two arcs connecting the endpoints of 
$\gamma^{\geq t}$.
We define $\gamma_t = \gamma^{\geq t} \cup \alpha_t$.

Suppose $a_1 \leq t \leq a_2$. 
Define $\beta_{t}$ to be the intersection of
$D_1 \setminus D_2$ with the plane $\{ z = t \}$. 
Let $c^{\leq t}$ denote the intersection of $\overline{\gamma}$
with $\{z \leq t\}$.
We define $\gamma_t = \gamma^{\geq t} \cup \beta_t \cup c^{\leq t}$.

For $t \geq a_2$ we set $\gamma_t = \overline{\gamma}$.
 
As we vary the initial curve $\gamma \in Y$,
the 1-parametric family obtained in this way changes continuously.
This completes our definition of homotopy $\sigma$. And, finally,
we observe that:
 
\begin{lemma} \label{YZ}
$[\sigma_1(X)] = [z_3] \in H_3((\Pi S^2,\Pi_0 S^2), \mathbb{Z}_2)$.
\end{lemma}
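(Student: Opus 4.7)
The plan is to execute the "generic degree mod $2$" comparison that is sketched in the paragraph just before the lemma: exhibit CW structures on the $3$-manifolds parametrizing $X$ and $z_3$ such that $\sigma_1$ sends open $3$-cells of $X$ homeomorphically onto open $3$-cells of $z_3$, and such that the exceptional loci are swept into the $2$-skeleta. Cellular chains supported in dimension $\leq 2$ do not affect cellular $H_3$, so cell-by-cell agreement in dimension $3$ forces $[\sigma_1(X)]=[z_3]$ in $H_3((\Pi S^2,\Pi_0 S^2);\mathbb{Z}_2)$.

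The first step is to identify the generic locus. Let $U\subset\Pi S^2$ be the family of non-horizontal, non-trivial, non-tangent circles on $S^2$; this is open and dense in the image $z_3$. I would then check two local bijectivity statements. (a) For $z_3$: the natural parametrization by planes at distance $\leq 1$ from the origin is a homeomorphism from the preimage of $U$ onto $U$. (b) For $\sigma_1$ on $X$: each circle $C\in U$ has a unique highest and lowest point in the $z$-coordinate, and the two big circles through the South pole tangent to $C$ at these points share a common tangent line in $T_S S^2$, giving a unique pair $(\bar\gamma_{\phi_1}^m,\bar\gamma_{\phi_2}^m)\in Y$ with $\sigma_1(\sigma_{1/2}^{-1}(\cdot))=C$; tracing the construction backwards through the perturbations at the South pole and through the Möbius-band gluing, this pair lifts uniquely to a point of $\sigma_1^{-1}(U)\subset X$. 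Thus $\sigma_1$ restricts to a homeomorphism $\sigma_1^{-1}(U)\to U$.

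Next, the complement $z_3\setminus U$ is at most $2$-dimensional: horizontal circles form a $1$-parameter family, tangent circles form a $1$-parameter family, and the point curves fill $\Pi_0 S^2\cong S^2$. Its preimage in $X$ is also of dimension $\leq 2$, the main point being that each horizontal circle has a $1$-parameter family of preimages corresponding to the rotation of the bisecting meridian $m$. I would fix CW structures on the parametrizing $3$-manifolds of $X$ and $z_3$ so that these exceptional sets sit inside the $2$-skeleta; on the complementary $3$-cells the cellular chain representing $\sigma_1(X)$ and that representing $z_3$ both send each open cell homeomorphically onto the same open subset of $U$, so they agree as cellular $3$-chains modulo the $2$-skeleton. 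This is precisely the conclusion the author anticipates, and it yields the desired homology.

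The step I expect to be most delicate is making the CW compatibility honest through all the gluings defining $X$: the edge identification $a\sim b$ of the triangle $T$, the gluing of $X_m$ with $X_{m^\pi}$ when the meridian $m$ is rotated by $\pi$, the South-Pole disk perturbations, and the collapse of a neighborhood of the diagonal $\{(\phi,\phi)\}$ all introduce potential discrepancies between the "geometric" picture and the CW combinatorics. One has to argue that these perturbations do not generate spurious preimages of points of $U$, and that the exceptional set really descends to a subcomplex of dimension $\leq 2$ after the identifications. Once one adopts the viewpoint of $X$ as an $S^1$-family of Möbius bands whose exceptional set is carried by the boundary circles together with the latitude-extremal locus, this becomes a routine smooth-topology verification; the only genuinely new content beyond what is already written in Section 2.3 is the bookkeeping that $\sigma_1$ is a generic homeomorphism onto its image.
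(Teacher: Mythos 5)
Your proposal is correct and takes essentially the same route as the paper: the paper's proof likewise singles out the exceptional locus (there, the family $L$ of longitudinal circles), fixes CW structures placing $L$ in the $1$-skeleton of $z_3$ and $\sigma_1^{-1}(L)$ in the $2$-skeleton of $X$, and uses the fact that $\sigma_1$ maps the complement homeomorphically onto $z_3\setminus L$ to identify the two $\mathbb{Z}_2$-chains in dimension $3$. Your version merely carries out the same generic-degree-mod-$2$ bookkeeping in more detail.
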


\begin{proof}
Let $L = \{l_{s} \} \subset z_3$, $-1 \leq s \leq 1$, denote 
all longitudinal circles $z = s$.
We fix a CW structure on $z_3$ so that $L$ is contained in the 1-skeleton
and a CW structure on $X$ so that $\sigma_1^{-1}(L)$ is contained in the
2-skeleton.
Note that $\sigma$ maps $X \setminus \sigma_1^{-1}(L)$ homeomorphically
onto $z_3-L$. It follows that the simplicial $3$-chain corresponding to $z_3$
coincides with the image
of a $3$-dimensional chain with $\mathbb{Z}_2$-coefficients 
respresenting the fundamental class of $X$
under the homomorphism induced by $\sigma$. 
\end{proof}


{\bf 2.4. Meridional slicing of $M$ and the lengths of three simple
periodic geodesics.}

The discussion in section 2.1 and the description of cycles $u_1, u_2$
and $u_3$ in sections 3.2 and 3.3 imply the following proposition:

\begin{Pro} \label {Proposition2.2} 
Assume that $p,q$ are (not necessarily distinct) points of $M$, and $\phi$ is a continuous map of $S^1$ into the space
of piecewise smooth curves connecting $p$ and $q$ in $M$ such that for each point $r\in M$ different from $p$ and $q$
exists unique $t\in S^1$ such that $\phi(t)$ passes through $r$. Assume also that for each $t\in S^1$ the length
of $\phi(t)$ does not exceed $L$. (In other words, $\phi$ is a slicing of $M$ into picewise smooth curves of length $\leq L$
connecting $p$ and $q$.) Assume also, that $\min_{t\in S^1}$ length$(\phi(t))\leq L_0$ for some $L_0\leq L$. Then
there exist three distinct simple periodic geodesics on $M$ of non-zero index with lengths not exceeding, respectively, $L_0+L$, $2L$ and $4L$.
\end{Pro}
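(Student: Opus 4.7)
The plan is to use the slicing $\phi$ to build three relative cycles $u_1,u_2,u_3$ in $(\Pi M,\Pi_0 M;\mathbb{Z}_2)$ whose singular simplices consist of simple closed curves of controlled length and which represent the fundamental homology classes $z_1,z_2,z_3$ introduced in section 2.1, and then invoke Proposition \ref{Proposition2.1}.

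First I would repackage $\phi$ as a continuous surjection $F: S^2 \to M$ playing the role of the diffeomorphism $f$ from sections 2.2--2.3. After parametrizing each slice $\phi(t)$ proportionally to arclength on $[0,1]$ and identifying $S^2$ with the quotient of $S^1 \times [0,1]$ obtained by collapsing each end-circle to a point (sent to $p$ and $q$), set $F(t,s) = \phi(t)(s)$. The unique-coverage hypothesis ensures $F$ is injective off the poles, and it maps the meridians $\{t\} \times [0,1]$ to the slices $\phi(t)$. Even though $F$ is only a continuous surjection rather than a diffeomorphism, it is enough to push forward cycles in $\Pi S^2$ to $\Pi M$, because the cycles we need are defined by prescribing simple closed curves as concatenations of slices plus short perturbations.

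Next I would copy the constructions of $v_1, v_2, v_3$ from sections 2.2--2.3 verbatim, with the slices $\phi(t)$ playing the role of the meridians $m_\alpha$, and set $u_i = F_* v_i$. For $u_1$, pick $t_0$ so that $\phi(t_0)$ has length within $\delta$ of $L_0$ and use it as the distinguished ``shortest meridian'', so that every curve in $u_1$ is the union of $\phi(t_0)$ with a second slice $\phi(t)$, possibly truncated and closed up near the poles exactly as in section 2.2; this yields length $\leq L_0 + L + \delta$. The cycle $u_2$ is obtained by letting the distinguished slice vary over $S^1$, giving length $\leq 2L + \delta$. The cycle $u_3$ is the push-forward under $F_*$ of the cycle $X$ constructed in section 2.3, whose curves consist of at most four arcs of slices plus arbitrarily small perturbations, giving length $\leq 4L + \delta$. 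The homological identifications $[v_i] = [z_i]$ worked out in sections 2.2--2.3 are purely topological --- they use only the meridional structure together with $\mathbb{Z}_2$ coefficients --- so they transfer to $[u_i] = [z_i]$.

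Finally, I apply Proposition \ref{Proposition2.1} to $u_1, u_2, u_3$ with length bounds $L_1 = L_0 + L + \delta$, $L_2 = 2L + \delta$, $L_3 = 4L + \delta$, producing three distinct simple periodic geodesics of positive index with the claimed bounds up to an additive $\delta$; letting $\delta \to 0$ and using an Arzel\`a--Ascoli argument on the uniformly bounded family of short simple periodic geodesics gives the sharp bounds $L_0 + L$, $2L$, and $4L$. The main technical nuisance I expect is the case $p = q$, where the slices are loops based at one point and the perturbations removing self-intersections near the poles from section 2.3 need to be carried out consistently; but since those constructions only require a meridional structure and an elementary smoothing argument, I anticipate no essential obstacle.
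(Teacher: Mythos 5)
Your proposal is correct and follows essentially the same route as the paper, which simply asserts that Proposition 2.2 follows from the cycle constructions of sections 2.2--2.3 together with Proposition 2.1; you have merely filled in details the authors leave implicit (repackaging the slicing as a map from $S^2$, the $\delta\to 0$ limit, and the degenerate case $p=q$).
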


\begin{Pf}{Proof of Theorem 1.2}
Combining this proposition with Theorem 1.3 B from [LNR] we immediately obtain a proof of Theorem 1.2. (Theorem 1.3B from [LNR] asserts that for any Riemannian
$2$-sphere $M$ of area $A$ and diameter $d$ and any point $p\in M$ there exists
a slicing of $M$ into nonself-intersecting loops based at $p$ of length
$\leq 200d\max\{1,\ln({\sqrt{A}\over d})\}$ so that these loops instersect
each other only at $p$.)
\end{Pf}

Now we are going to prove Theorem 1.3.

\begin{Pf}{Proof of Theorem 1.3}

We prove Theorem 1.3 by combining Proposition 2.2
with results of [LNR].

We consider two cases.

Case 1. Suppose the diameter $d$ of the 2-sphere $M$
satisfies $d \leq \sqrt{3} \sqrt{A}$.
By Theorem 1.3 from [LNR] there exists
a slicing of $M$ into nonself-intersecting based loops of length
bounded by $664 \sqrt{A} +2 d < 700 \sqrt{A}$. It follows 
from the proof that these loops do not intersect except at their 
base point. 

Case 2. Suppose $d > \sqrt{3} \sqrt{A}$.
Let $p$ and $q$ be two points on $M$ 
that are at the maximal distance $d$ from each other.
Let $S_p(r) = \{x|dist(x,p)=r\}$ be the geodesic sphere
around $p$ of radius $r$.
For almost every $r$ the set $S_r(p)$ is a 
finite collection of simple closed curves on $M$.
By coarea inequality we can find $r$, 
$|r - d/2| \leq \frac{\sqrt{3}}{2}\sqrt{A}$,
such that $S_r(p)$ is a collection of simple
closed curves of total length at most $\frac{\sqrt{3}}{3} \sqrt{A}$.

Let $\gamma$ be a connected component of 
$S_r(p)$ that separates $p$ and $q$.
In other words, $M \setminus \gamma = D_1 \sqcup D_2$
with $p \in D_1$ and $q \in D_2$. 

Next we will construct a slicing in each of the discs $D_i$
using Theorem 1.6 C from [LNR].
For a Riemannian $2$-disc $D$ define 
$d_D = \max \{dist(p,x) | p \in \partial D, x \in D\}$. 
Note that $d_D$ is bounded from above by the diameter of $D$.
Theorem 1.6 C from [LNR] asserts that for any two points $x$ and $y$
on the boundary of $D$ there exists a path homotopy between two arcs of
$\partial D$ with endpoints $x$ and $y$, such that the lengths of all
curves in the homotopy are less than 
$2L + 686 \sqrt{A} + 2d_D$, where $L$ denotes the length of $\partial D$.
Moreover, it follows from the proof that the curves have no 
self-intersections and intersect each other only 
at the endpoints.

We claim that $d_{D_i} \leq d/2 + \frac{3 \sqrt{3}+3}{2} \sqrt{A}$.
Indeed, let $l$ be a minimizing geodesic from $p$ to $q$.
Since $\gamma$ is a connected component of a geodesic sphere,
$l$ will have a unique intersection point with $\gamma$.
Let $a$ denote this intersection point, $l_1$ be the arc of $l$
from $p$ to $a$ and $l_2$ be the arc of $l$ from $a$ to $q$. 

Let $p_1 \in D_1$ and $a_1 \in \gamma$ be such that
$dist(p_1,a_1) = d_{D_1}$. Using the triangle inequality
we obtain the following inequalities:
$$d \geq dist(p_1,q) \geq dist(p_1,\gamma) +dist(\gamma,q)
\geq d_{D_1} - \frac{length(\gamma)}{2} + length (l_2).$$

By definition of $\gamma$ we have $length(\gamma) \leq \frac{\sqrt{3}}{3} \sqrt{A}$
and $length(l_2) \geq d/2 - \frac{\sqrt{3}}{2} \sqrt{A}$.
We obtain the desired bound 
$d_{D_1} \leq d/2 + \frac{3 \sqrt{3}+3}{2} \sqrt{A}$.
For disc $D_2$ the proof is identical.
Choosing any two points $x$ and $y$ on $\gamma$ and applying 
Theorem 1.6 C from [LNR] to $D_1$ and $D_2$
we obtain a meridional slicing of $M$ by curves
connecting $x$ and $y$ with length bounded by
$d + 700 \sqrt{A}$.

By Proposition 2.2 we obtain three distinct simple closed curves
of positive index with
the length of the first curve bounded
by $d + 700 \sqrt{A}$, the length of the second curve bounded by
$2d + 1400 \sqrt{A}$, and the length of the third curve bounded
by $4d + 2800 \sqrt{A}$.

\end{Pf}

\section {Slicing of $M$ into short curves in the absence of short simple periodic geodesics.}

{\bf 3.1. Geodesic segments beween a pair of the
most distant points of $M$.} In this subsection we review some ideas
of C.B. Croke from [Cr]. Let $p$, $q$ be two points on $M$ such that the distance between them is the maximal possible (and is equal to $d$).
In [Cr] Croke observed that either there exists a periodic geodesic on $M$ of length $\leq 2d$, or there exists
a sweep-out of $M$ by a continuous $1$-parametric family of curves
connecting $p$ and $q$ of length $\leq 8d$, leading to an upper bound $9d$
for the length of the shortest non-trivial periodic geodesic.
\par
The argument of Croke starts from the observation that $p$ and $q$ can
be connected by finitely many minimizing geodesics that divide $M$ into
geodesic digons with angles at $p$ and $q$ that do not exceed $\pi$.
This fact easily follows from Berger's lemma. Indeed, if this is not so, then
one
would be able to increase the distance between $p$ and $q$ by moving one
of these two points inside a geodesic digon that has the angle adjacent to this point greater than $\pi$ and is free
of minimizing geodesics.
\par
Now we would like to interrupt our exposition to review several definitions and facts. A simple closed curve on $M$ is called {\it convex}
with respect to one of two discs $D$ bounded by this curve if each sufficiently short geodesic in $M$ connecting two points of $\gamma$ is
contained in $D$. Sometimes we will say that a closed curve is convex without mentioning the relevant disc,
if it is clear from the context which of the two discs we have in mind.
A homotopy between $\gamma$ and another closed curve $\beta$ in a $2$-disc
$D$ is called {\it monotone}
if all curves $\gamma_t$, $t\in [0,1], \gamma_0=\gamma, \gamma_1=\beta$, in this homotopy are simple, and for
each $t_1, t_2$ such that $t_1<t_2$ the closed disc bounded by $\gamma_{t_2}$ in $D$ is contained in the closed
disc bounded by $\gamma_{t_1}$ in $D$.
In other words,
``monotone" means here that the disc inside the digon bounded by a
curve at a later moment of time is contained in the disc bounded by a curve
at an earlier moment of time.
A {\it path homotopy} betweeen
two curves connecting a pair of points $u$ and $v$ is a homotopy that
passes through curves connecting $u$ and $v$.
Further, we will be using the Birkhoff curve-shortening process which is explained in [Cr] and has the following properties (also explained in [Cr]):
For each closed curve $\gamma$ the Birkhoff curve-shortening process produces
a length non-increasing homotopy that connects $\gamma$ with either a point or a periodic geodesic. If the geodesic to which Birkhoff curve-shortening process
converges has index $>0$, we can perturb it decreasing its length, and
continue using the Birkhoff curve-shortening process. Therefore,
without any loss of generality we can assume that the process converges either
to a point or to a non-trivial periodic geodesic of index $0$.
\par
Birkhoff curve-shortening process depends on a
small parameter $\epsilon>0$. If the
initial curve is simple and convex to a disc $D$, then for all sufficiently
small positive values of this parameter
all intermediate curves will be simple, convex and contained in $D$;
the resulting homotopy will be monotone.
Other features of this well-known process
are not important for us here, and, in fact, this process can be replaced by other curve-shortening processes with similar
properties such as Grayson's curvature flow ([Gr], or a process devised by J. Hass and P. Scott ([HS]).

\par
The next observation of Croke is that these geodesic digons with 
the vertices at $p$ and $q$ and angles 
not exceeding $\pi$ are convex, and, therefore, 
the Birkhoff curve-shortening process will shrink the boundary of 
each digon inside the digon in a monotone way (for a sufficiently
small value of the parameter). The process will terminate
either at a point or at a non-trivial periodic geodesic of length not exceeding $2d$ inside the digon. If there are no such periodic geodesics,
the boundary of the digon contracts to a point by a length non-increasing
monotone homotopy. 

Then Croke observes that this homotopy contracting the boundary of the digon can be transformed into a path homotopy
between two geodesics forming the boundary that passes through paths
of length $\leq 8d$ containing in the digon.
All those homotopies
for various digons can be combined into a sweep-out of $M$ into paths
between $p$ and $q$ of length $\leq 8d$. (See Lemma 3.1 below
for an explicit statement in this direction that also
yields a better bound than $8d$.) Some of these paths are the
original minimizing geodesics between $p$ and $q$ of length $\leq d$. Choosing one of these
minimizing geodesics and attaching it to all the paths one obtains a
$1$-parametric family of loops of length $\leq 9d$ based at $p$ and representing a non-trivial homology class of the space of free loops of $M$. 
Now the standard Morse-theoretic proof of the existence of periodic geodesics
implies the existence of a non-trivial periodic geodesic on $M$ of length $\leq 9d$.
\par

{\bf 3.2. Some observations of M. Maeda.} M. Maeda improved this argument in two directions ([M]). Let $D$ be one of the convex
digons formed by two minimizing geodesics between $p$ and $q$. Let $x\in D$
be a point where $f(x)=dist(x, \partial D)$ attains its maximum. Denote
this maximal value by $r$. First, he observed that $r$ can exceed ${d\over 2}$ for at most
one of the digons (and, of course, in this case it still does not exceed $d$). The reason
is simple: if there are two digons $D_1,\ D_2$ and points $x_1\in D_1$, $x_2\in D_2$ such that 
$dist(x_i,\partial D_i)>{d\over 2}$ for $i=1,2$, then $dist (x_1,x_2)>d$ which is impossible.

To explain the second observation of Maeda  let
$\Delta_t,\ t\in [0,1]$, denote (convex)
curves obtained from $\Delta_0=\partial D$ during an application of the Birkhoff curve-shortening
flow ending at a point $u$ (so that $\Delta_1={u}$). Let $\gamma$ 
denote a minimal
geodesic connecting a closest to $u$ point $z$ on $\Delta_0$ with $u$,
then for each $t$ there is exactly one point of intersection of $\gamma$
with $\Delta_t$, and this point of intersection moves continuously
and monotonically
from $z$ to $u$.

Indeed, otherwise, $\gamma$ might exit at a second point of intersection and will need to intersect
$\Delta_t$ again to return to the disc bounded by $\Delta_t$ and containing $q$.
As the result, 
$\gamma$ will touch from inside one of
the piecewise geodesic curves $\Delta_t$ which is impossible because of their
convexity. Indeed, assume that $\gamma$ touches a curve $\Delta_s$ from
inside at a point $w$. (When we say that $\gamma$ touches $\Delta_s$ from inside at $w$, we
mean that an open arc of $\gamma$ containing $w$ stays
inside the closed disc bounded by $\Delta_s$ and containing $q$.) 
Take two points $w_1, w_2\in \Delta_s$ that lie on opposite sides
of $w$ on $\Delta_s$ and such that the length of the arc of $\Delta_s$ that
connects them is very small. Then the convexity of $\Delta_s$ implies that
the minimal geodesic connecting $w_1$ and $w_2$ must intersect $\gamma$
at at least two distinct points which contradicts the minimality
of $\gamma$. 

The second option is 
$\gamma$ reaching $\Delta_t$ at a moment of time $s_1$, then going along
$\Delta_t$ until a moment of time $s_2>s_1$, then entering
the open disc bounded by $\Delta_t$ (and not returning to $\Delta_t$ anymore).
To exclude this possibility consider $w^*=\gamma(s_2)$. (In other words,
$w^*$ is the ``last" point of $\gamma$ on $\Delta_t$.) Take two points
$w_1^*$ and $w_2^*$ on $\Delta_t$ that are very close to $w^*$ and lie on
the opposite sides of $w^*$. (So, one of them is in the image of $\tau$,
and the other is not. Now, the convexity of $\Delta_t$ imples that the minimal
geodesic between $w_1^*$ and $w_2^*$ must intersect $\gamma$ at a point
different from $w_1^*$ and $w_2^*$, and we again obtain a contradiction
with the minimality of $\gamma$.

Now we know that $\gamma$ intersect $\Delta_t$
at exactly one point. An obvious argument shows that this point of intersection
moves continuously and monotonously along $\gamma$.

Denote the segment of $\gamma$ from $z$ to the
point of intersection of $\gamma$ with $\Delta_t$ by $\gamma_t$. Maeda considers loops based at $z$
formed by first following $\gamma_t$, then $\Delta_t$ , then $\gamma_t$ travelled in the opposite direction, and concludes that they provide a way to contract
$\Delta_0$ to a point through loops based at $z$ of length less than or equal
to $2r+2d$, where $r$ denotes $\max_{x\in D_t} dist (x,\Delta_t)$. Here
$D_t$ denote the disc bounded by $\Delta_t$ containing $u$. (One also needs
to cancel the the loop obtained by foloowing $\gamma$ from $z$ to $u$,
and then back to $z$ along itself at the end of this homotopy.)
One can use the homotopy contracting $\Delta_0$ as a loop based at $z$ to construct a path homotopy between one side of $\Delta_0$ to the other (keeping
the endpoints fixed during the homotopy) (see Fig. \ref{LS12} ). The length of curves in this homotopy
is bounded by $2r+3d\leq 4d$, if $r\leq {d\over 2}$, and by $5d$ in the
general case (as $r$ cannot exceed $d$).  

\begin{figure}[center] 
\includegraphics[scale=0.3]{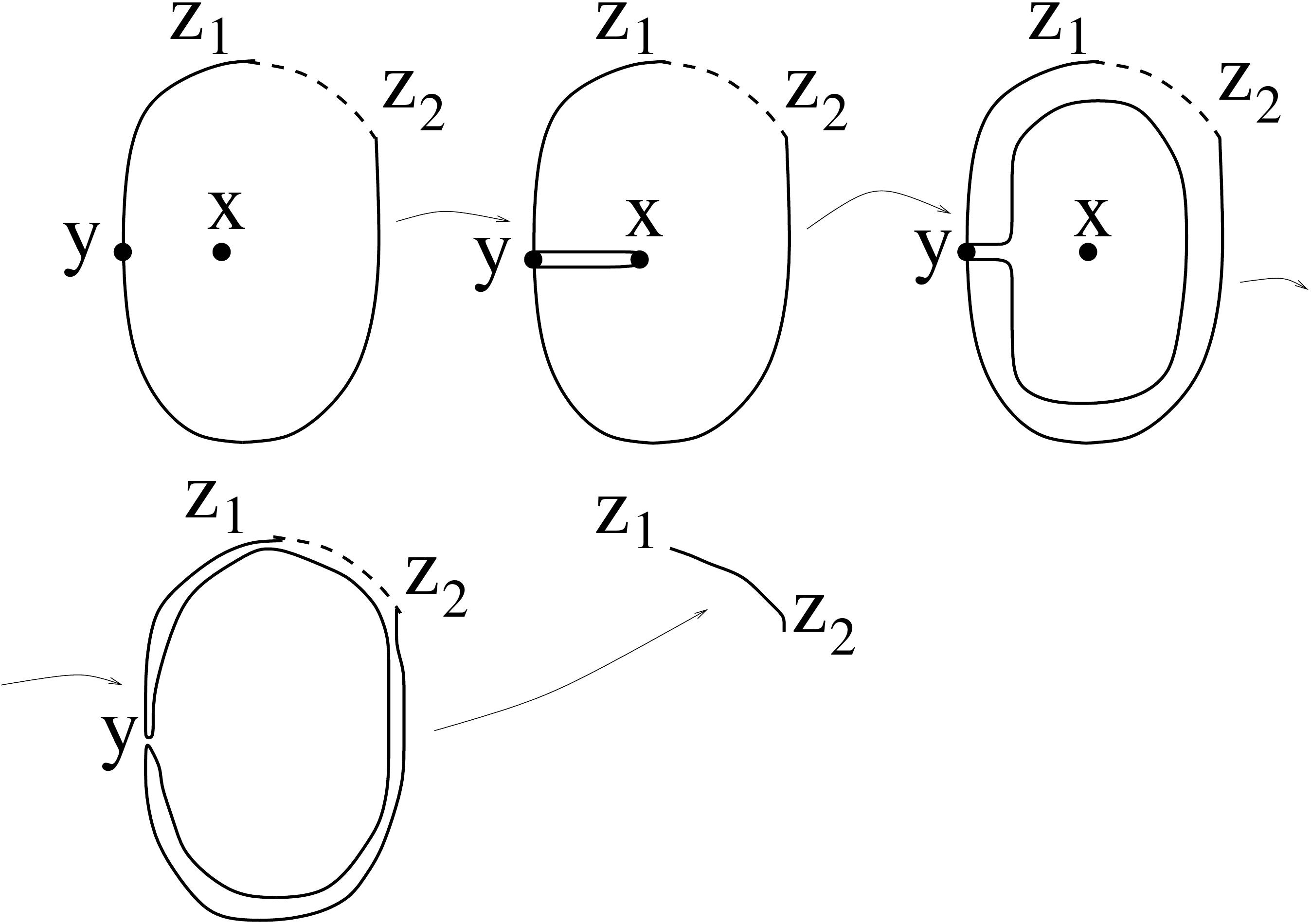} 
\caption{}
\label{LS12}
\end{figure}

\par
Now, one can start from the only digon, where $r>{d\over 2}$, if such a digon exists, create its boundary out of a point
by performing the Bikhoff curve-shortening in reverse, and then use the constructed path homotopies to move
sides of the digon through adjacent digons. We can continue in this way getting rid of all digons until only one of digons, $\Delta$,
will be left. We could arrange for $\Delta$ to be any digon other than
the digon we started from (which is the digon with $r\geq {d\over 2}$, if such a digon exists, and an arbitrary digon
otherwise). Then it remains only to construct a path homotopy contracting the boundary of $\Delta$ - for example, using
the Birkhoff curve-shortening process.
\par
{\bf 3.3. Homotopies that pass via simple curves.} It had been observed in [NR], that one can perturb this homotopy so that the resulting homotopy
would pass only through simple curves (as well as the points at the beginning and the end of the homotopy) (see Lemma 3.1 below for
a formal statement of such nature that will be
required for our purposes below). As the result, one obtains
a segment $I$ in the space of closed curves on $M$ such that its endpoints are trivial curves and all intermediate curves are simple. This segment
can be used to obtain via Morse theory (as above) one simple geodesic of controlled length ($\leq 5d$, if there are no geodesics of length $\leq 2d$
of index zero). 
\par
Alternatively, we could just start from a side of the initial digon, to connect it 
with the other side by a path homotopy
that passes through nonself-intersecting curves
of length $\leq 2r+3d+\epsilon$ for an arbitrary
small $\epsilon$ using a homotopy constructed in Lemma 3.1 below See
Fig. ~\ref{LS14}).

\begin{figure}[center] 
\includegraphics[scale=0.3]{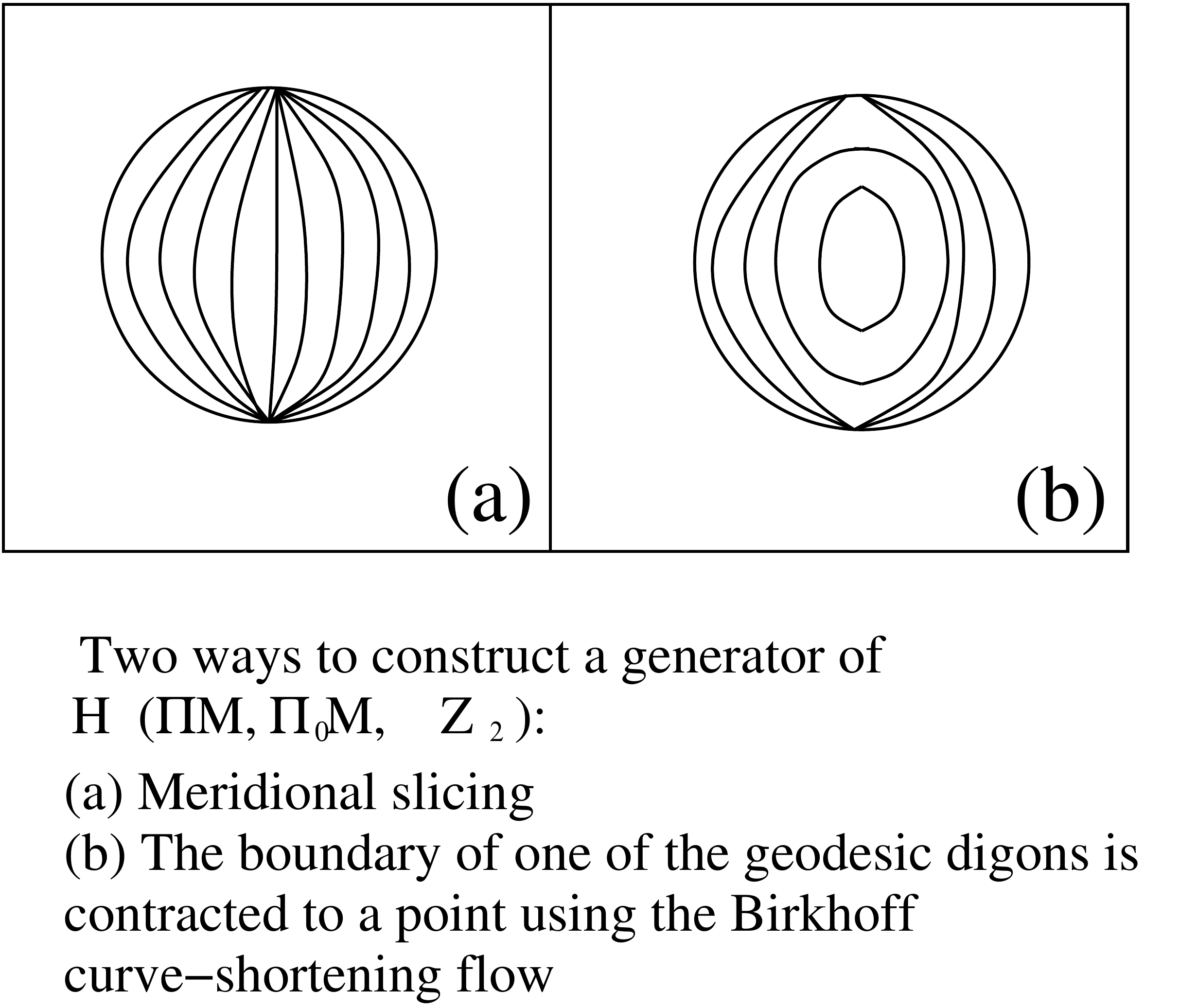} 
\caption{}
\label{LS14}
\end{figure}

Different curves in this path homotopy intersect each other only at their
common endpoints.
Then one
can consider a similar path homotopy through the adjacent digon, then through the next one, and so on, and continue in this manner, until
we return to the initial geodesic segment slicing out $M$ into nonself-intersecting segments of length $\leq 2r+3d+\epsilon\leq 5d+\epsilon$
passing through $p$ and $q$ and not intersecting at any other points.
Proposition 2.2 implies the existence of three distinct simple
periodic geodesics with lengths $\leq 6d$, $10d$ and $20d$, correspondingly.
However, the first bound can be improved to $5d$, as the resulting $1$-cycle in the space of non-parametrized
simple curves will be homologous to the $1$-cycle constructed in the previous paragraph. Therefore, the first of these simple periodic geodesics will coincide
with the 
periodic geodesic obtained in the previous paragraph, and its length will not exceed $5d$.
\par
{\bf 3.4. Path homotopies between arcs of convex curves}. Here we prove
the following lemma (some weaker versions
of which can be found in [Cr], [M] and [NR1]). We will present a self-contained
proof that incorporates some observations of Croke
([Cr]) and Maeda ([M]). We have already  used this lemma in section 3.3
to conclude that if $M$ has no simple closed geodesics of length
$\leq 2d$, then there are three distinct simple periodic geodesics on $M$
with lengths $\leq 5d,\ 10d$ and $20d$, correspondingly.

\begin{lemma} \label {Lemma3.1}
Let $D\subset M$ be a $2$-disc, and $\gamma_t,\ t\in[0,1],$ be a monotone homotopy between its boundary $\gamma_0$ 
and a point $x=\gamma_1$ in $D$. Assume that for each $t$ the length of $\gamma_t$ does not exceed $L$,
all curves $\gamma_t$ are convex with respect to the discs
contained in $D$,
and that
$y\in\gamma_0$ is a point such that $dist(x,y)=dist(x,\gamma_0)=r$. 
Then for each positive
$\epsilon$ there exists a monotone
homotopy $l_t, t\in [0,1],$ between $\gamma_0$ and the constant loop $y$ that
passes through simple loops based at $y$ and contained in $D$ such that
the lengths of each loop $l_t$ does not exceed $L+2r+\epsilon$.
Moreover, if $z_1, z_2$ are any two points on $\gamma_0$, then there exists
a path homotopy $A_t$ between two arcs $A_0$ and $A_1$ of $\gamma_0$ between
$z_1$ and $z_2$ such that for each $t$ the length of $A_t$ does not exceed
$\max\{$length$(A_1),\ $length$(A_0)\}+L+2r+\epsilon$, all arcs $A_t$ in this path homotopy
do not self-intersect, and
different arcs intersect each other only at their common
endpoints.
\end{lemma}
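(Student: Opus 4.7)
The plan is to follow the Maeda-style approach already sketched in Sections 3.1--3.3: first build a monotone null-homotopy of $\gamma_0$ through simple loops based at $y$ (lassos), then graft these loops onto a fanning family of auxiliary arcs from $z_1, z_2$ toward $y$ to yield the required path homotopy.

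\textbf{The monotone lasso homotopy.} I invoke the Maeda-type observation established in Section 3.2 that the minimizing geodesic $\gamma$ from $y$ to $x$ meets each $\gamma_t$ in exactly one point $p_t$, moving continuously and monotonically along $\gamma$ from $y$ to $x$. Writing $\gamma_t^\flat$ for the sub-arc of $\gamma$ from $y$ to $p_t$, the non-simple lasso $\gamma_t^\flat \cdot \gamma_t \cdot (\gamma_t^\flat)^{-1}$ based at $y$ has length $\leq 2r+L$. Inside a thin tubular neighborhood of $\gamma$, I replace the doubled arc $\gamma_t^\flat$ by two parallel curves at signed distance $\pm w(t)$ from $\gamma$, attached to $\gamma_t$ at two symmetric points straddling $p_t$ and closed near $y$ by a small semicircular cap; the result $\ell_t$ is a simple loop based at $y$ in $\bar D$, of length $\leq 2r+L+\epsilon/2$ provided $w(t)>0$ is small. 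I take $w(t)$ strictly decreasing in $t$ and vanishing at both endpoints. I append a second phase that retracts the now-doubled geodesic $\gamma\cdot\gamma^{-1}$ back to $y$ by shrinking from the $x$-end, similarly perturbed; after reparameterization into $[0,1]$ this gives the desired $l_t$. Monotonicity of the nested regions $R_t$ bounded by $l_t$ follows because, up to the $O(w(t))$ perturbation, $R_t$ is the union of the disc bounded by $\gamma_{s(t)}$ and a strip of width $w(t)$ along $\gamma_{s(t)}^\flat$; since $w(t)$ is decreasing, the portion of this strip newly exposed as $t$ increases lies inside the older disc, keeping the $R_t$'s nested.

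\textbf{The path homotopy.} After relabeling if necessary, assume $y\in A_1$, and let $\rho_1, \rho_2$ be the sub-arcs of $\gamma_0$ from $z_1, z_2$ to $y$, so that $A_1=\rho_1\cdot\rho_2^{-1}$. Choose a monotone reparameterization $\tau:[0,1]\to[0,1]$ with $\tau(0)=0$, $\tau(1)=1$, and a continuous family of parameterizations $\phi_t:\gamma_0\to\ell_t$ of the null-homotopy collapsing $\gamma_0$ to $y$ (so $\phi_0=\mathrm{id}$, $\phi_1\equiv y$); set $q_i(t):=\phi_{\tau(t)}(z_i)\in\ell_{\tau(t)}$. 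I then construct a fanning family of simple arcs $\rho_i^t$ in $\bar D$ ($t\in[0,1]$, $i=1,2$) satisfying $\rho_i^0=\{z_i\}$; $\rho_i^1=\rho_i$; $\rho_i^{t_1}\cap\rho_i^{t_2}=\{z_i\}$ for $t_1\neq t_2$; $\rho_i^t$ ends at $q_i(t)$; and $|\rho_i^t|\leq|\rho_i|+\epsilon/4$. Then set
\[
 A_t \;:=\; \rho_1^t \cdot \eta_t \cdot (\rho_2^t)^{-1},
\]
where $\eta_t$ is the sub-arc of $\ell_{\tau(t)}$ from $q_1(t)$ to $q_2(t)$ that does not pass through $y$. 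At $t=0$, $\rho_i^0$ is trivial and $\eta_0=A_0$; at $t=1$, $\eta_1=\{y\}$ and $\rho_1^1\cdot(\rho_2^1)^{-1}=A_1$. The bound $|A_t|\leq|\rho_1^t|+|\rho_2^t|+|\eta_t|\leq|A_1|+L+2r+\epsilon\leq\max\{|A_0|,|A_1|\}+L+2r+\epsilon$ is immediate. Pairwise disjointness of $A_{t_1}, A_{t_2}$ away from $z_1, z_2$ follows from three stratifications: the $\rho_i^t$'s fan out from $z_i$; the $\eta_t$'s lie on the nested simple loops $\ell_{\tau(t)}$; and the crossings of $\rho_i^{t_2}$ with $\ell_{\tau(t_1)}$ for $t_2>t_1$ can be arranged, by choosing the fanning collar on the appropriate side, to fall on the small arc of $\ell_{\tau(t_1)}$ containing $y$, which is complementary to $\eta_{t_1}$.

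\textbf{Main obstacle.} The main work is the combined topological bookkeeping needed for the fanning step: simultaneously arranging that the $\rho_i^t$'s are pairwise disjoint away from $z_i$, track the endpoints $q_i(t)$ on the nested loops, respect the length bound $|\rho_i^t|\leq|\rho_i|+O(\epsilon)$, and cross each earlier $\ell_{\tau(t_1)}$ on the complementary arc to $\eta_{t_1}$. The length arithmetic and the basic properties of the lasso homotopy $\{\ell_t\}$ are routine once the geometric setup is in place.
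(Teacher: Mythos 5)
Your construction of the based-loop homotopy $l_t$ is essentially the paper's: take the minimizing geodesic $\tau$ from $y$ to $x$, use the convexity/minimality argument to see that $\tau$ meets each $\gamma_t$ in a single point moving monotonically, form the lassos $\tau|_{[0,t]}\cdot\gamma_{t^*}\cdot\tau|_{[0,t]}^{-1}$, and desingularize by pushing the two copies of the geodesic arc apart inside a thin neighborhood of $\tau$. That part is fine and matches the paper (the paper is even terser about the final retraction of $\tau\cdot\tau^{-1}$ to $y$, which you handle explicitly).

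For the path homotopy between the two arcs, however, you take a genuinely different and substantially harder route, and the hard step is not carried out. The paper keeps the two sub-arcs of $\gamma_0$ from $z_1$ to $y$ and from $y$ to $z_2$ \emph{fixed} throughout, runs the based null-homotopy in reverse to insert the growing lassos at the basepoint $y$ (so the intermediate curve is $A_0^1 * l_{1-s} * A_0^2$, with length bound immediate), arrives at $A_0^1*\gamma_0*A_0^2$, and then cancels the doubled sub-arcs; no auxiliary family of connecting arcs is ever needed. Your version instead requires a fanning family $\rho_i^t$ from $z_i$ to tracked points $q_i(t)=\phi_{\tau(t)}(z_i)$ with $|\rho_i^t|\le|\rho_i|+\epsilon/4$, pairwise disjointness away from $z_i$, and the property that $\rho_i^{t_2}$ crosses each earlier loop $\ell_{\tau(t_1)}$ only on the complementary arc to $\eta_{t_1}$. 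You flag this as the ``main obstacle'' but only postulate it, and as stated it is not clearly achievable: the length bound forces $q_i(t)$ to stay within distance roughly $|\rho_i|$ of $z_i$ for all $t$ (consider $z_i$ very close to $y$, when $|\rho_i|$ is tiny but a generic parameterization $\phi_{\tau(t)}$ carries $z_i$ deep into $D$, e.g.\ near $x$), so it is an unstated constraint on the choice of $\phi_t$; and that constraint pushes $q_i(t)$ into the thin corridor near $y$, which then conflicts nontrivially with the requirement that both fans and all the arcs $\eta_{t}$ thread through that same corridor disjointly. Since this simultaneous bookkeeping is exactly the content of the second assertion of the lemma, the proposal has a genuine gap there; the fix is simply to abandon the fanning family and use the paper's insert-and-cancel argument, for which the length bound $\max\{\mathrm{length}(A_0),\mathrm{length}(A_1)\}+L+2r+\epsilon$ and the disjointness both follow directly from the properties of the lassos already established in your first part.
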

\par\noindent
\begin{proof} Let $\tau$ be a minimal geodesic from $y$ to $x$.
As $y$ is a closest to $x$ point of $\gamma_0$, $\tau$ is contained in $D$.
Now note that $\tau$
can intersect each curve $\gamma_t$ only at a point
or along an interval. 
Further, note that $\tau$ needs to intersect $\gamma_t$ at least once to
reach $x$. We are going to prove by contradiction that once $\tau$ enters the open disc $D_t$ containing $x$ bounded by $\gamma_t$
it will not be able to exit this disc. Indeed, if it exits $D_t$ it will
need to return to $\gamma_t$ later. This observation implies that 
for some $s<t$ $\tau$ touches $\gamma_s$ at some point $w$ but does not exit the closed 
disc bounded by $\gamma_s$. Now consider the minimizing geodesic in $M$
connecting two points $w_1, w_2\in \gamma_0$ that lie on the opposite sides
of $w$ and are very close to each other. The convexity of $\gamma_s$
implies that this geodesic must intersect $\tau$ at two different points,
which contradcts the minimality of $\tau$ (see Fig. ~\ref{LS6}).

\begin{figure}[center] 
\includegraphics[scale=0.3]{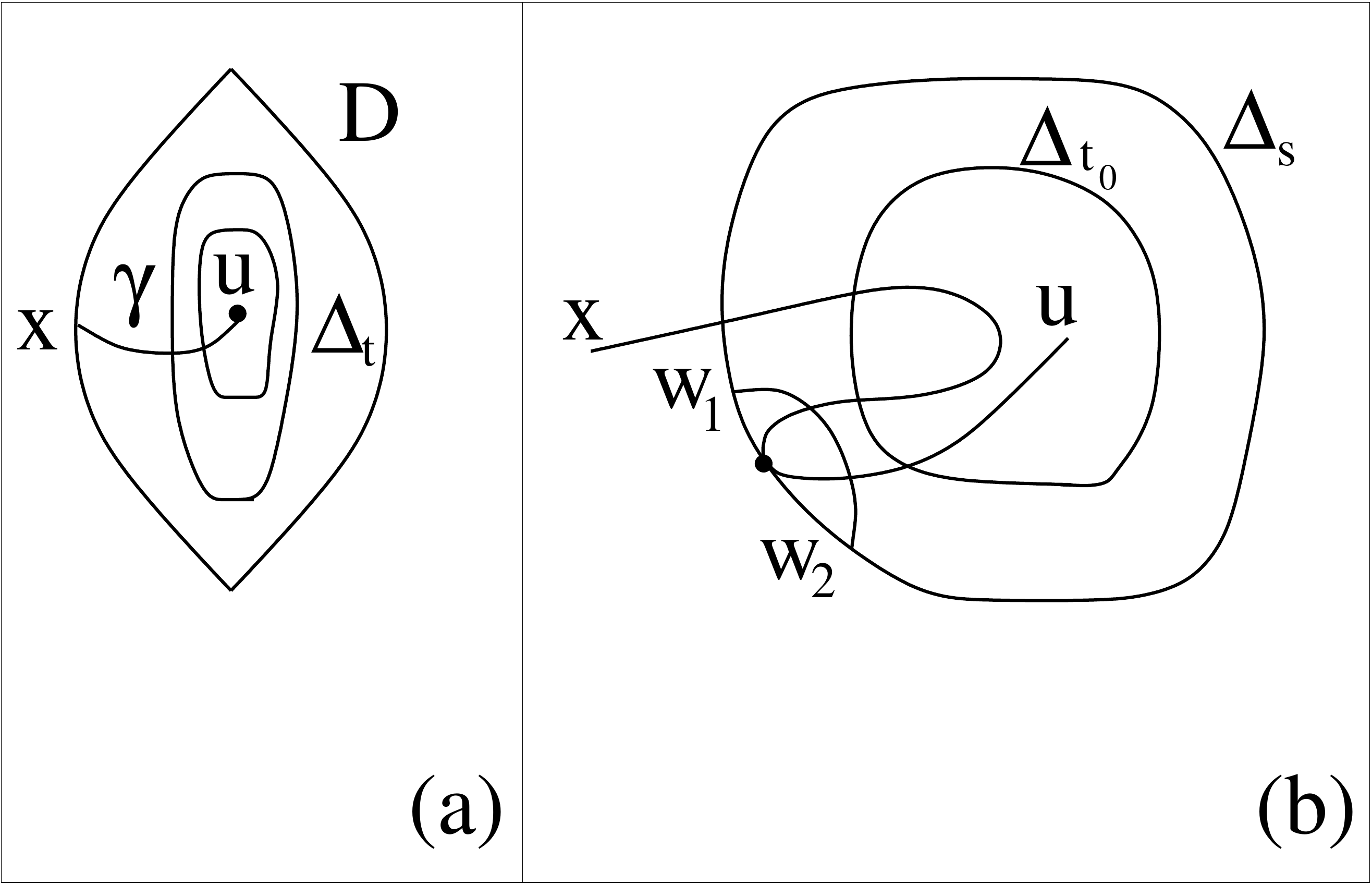} 
\caption{}
\label{LS6}
\end{figure}

In fact, we can continue
this argument and demonstrate that $\tau\bigcup\gamma_t$ is a point
and not a closed interval as it was done in section 3.2, but we do
not need this.

Now  we can construct
a homotopy $\tilde l_t$ between $\gamma_0$ and $\gamma_1$ as follows. For each $t$ we follow $\tau$ from
$\tau(0)$ to $\tau(t)$, then traverse $\gamma_{t^*}$ such that $\tau(t)\in
\gamma_{t^*}$ counterclockwise until we return back  to $\tau(t)$, then return
back along $\tau$ from $\tau(t)$ to $\tau(0)$. These curves will not be simple,
as we use an arc of $\tau$ between $\tau(0)$ and $\tau(t)$ twice, but an
obvious small perturbation makes these loops simple and pairwise intersecting only at $y$: We just move two segments of $\tau$ between $\tau(0)$ and $\tau(t)$
apart in different directions. For each $t$ the perturbation is very small,
but its magnitude increases with $t$. The point $\tau(t)$ is replaced by two very
close points on $\gamma_{t^*}$ on both sides of $\tau(t)$ that are used as 
the endpoints of the perturbed segments of $\tau$. The resulting
modfied homotopy will be the desired homotopy $l_t$.
\par
In order to prove the last assertion about the existence of the path homotopy
connecting two arcs between $z_1$ and $z_2$ we can proceed as follows (see
Fig. ~\ref{LS12}).
Denote the arc between $z_1$ and $z_2$ that contains $y$ by $A_0$.
Let $A_0^1$ and $A_0^2$ denote the arcs of $A_0$ from $z_1$ to $y$ and
from $y$ to $z_2$, respectively.
Start from $A_0$.
If we ran the homotopy of based loops in reverse direction, we can gradually 
grow loop $\gamma_0$ from point $y$. At the end of this homotopy we obtain 
the curve $A_0^1*\gamma_0*A_0^2$ connecting $z_1$ and $z_2$.
Now we can cancel
arcs between $z_1$ and $y$ as well as $z_2$ and $y$ (as each of these arcs
will be appearing twice with opposite orientations, and we can cancel these
arcs by means of obvious length non-increasing homotopies.)
\end{proof}

\section{Slicing of $M$ into short curves in the presence of one short simple periodic geodesic of index zero}

Now assume that there exists one non-trivial periodic geodesic $\gamma$ of length $\leq 2d$ of index $0$ which is simple and is the only simple periodic
geodesic of index $0$ and length $\leq 4d$.
We are going to construct a ``meridional slicing"
of $M$ into nonself-intersecting curves starting at $p$ and ending at $q$ and not intersecting at any other points of
length $\leq 10d+\epsilon$ for an
arbitrarily small $\epsilon$.
In this case Proposition 2.2 will imply the existence of two distinct simple periodic geodesics
of length $\leq 11d$ and $20d$ with positive
indices, so that they cannot coincide with the geodesic of index zero.
\par
Then we will obtain the first of these two geodesics of a positive index
using a homologous relative
$1$-cycle in the space of nonparametrized $1$-curves
that consists of curves of length $\leq 8d+\epsilon$. As the result, we will be able to conclude that the first of these two periodic geodesics
has length $\leq 8d$ (and not only $\leq 11d$). 
\par
Indeed, assume first that $\gamma$
does not arise during the Birkhoff-curve shortening process applied to the boundary of one of the geodesic digons considered in
the previous section obstructing its length non-increasing contraction to a point. Then we can proceed exactly as in
the previous section. So, without any loss of generality we can assume that $\gamma$ arises during the Birkhoff curve-shortening
process applied to the boundary of one of the digons and denote this digon
by $D$. (Recall
that the considered digons are formed by pairs of minimizing geodesics
connecting a pair of points $p,q\in M$ such that $dist(p,q)=d$. The angles
of each digon at both $p$ and $q$ do not exceed $\pi$.)
But $\gamma$ can obstruct the process of contracting the boundary
of only one of these digons. For the remaining digons we will be able to construct ``short" path homotopies from one side to the other.
After we will combine all these homotopies, we are going to have two options:
First, we can complete this homotopy by constructing
a path homotopy of one side of $\partial D$ to the other. Second, we can
contract the boundary of $D$ through $D$ to a point
as a free loop (see Fig. ~\ref{LS14}). The second option will lead to shorter closed curves but will yield only one ``short" simple geodesic
of index $1$. (In fact, Proposition 4.1 below will immediately imply that the boundary of $D$ can be contracted through simple closed curves of length $\leq 8d$.)
\par
So, assume that when one applies the Birkhoff curve-shortening process to $\partial D$ (which is assumed to be convex to $D$), the process
stops at a simple periodic geodesic $\gamma_0\subset D$. Of course, the length of $\gamma_0$ does not exceed $2d$.
Denote the piecewise geodesic convex closed curves arising during the Birkhoff curve-shortening process by $h_t$, where
$h_0=\partial D$ and $h_1=\gamma_0$.
\par
In the remainder of this section our goal will be to prove that
there exists a path homotopy between the two minimal
geodesics forming $\partial D$ that passes through nonself-intersecting
curves of length $\leq 10d$ that intersect each other only at their
common endpoints.
\par
\begin{figure}[center] 
\includegraphics[scale=0.3]{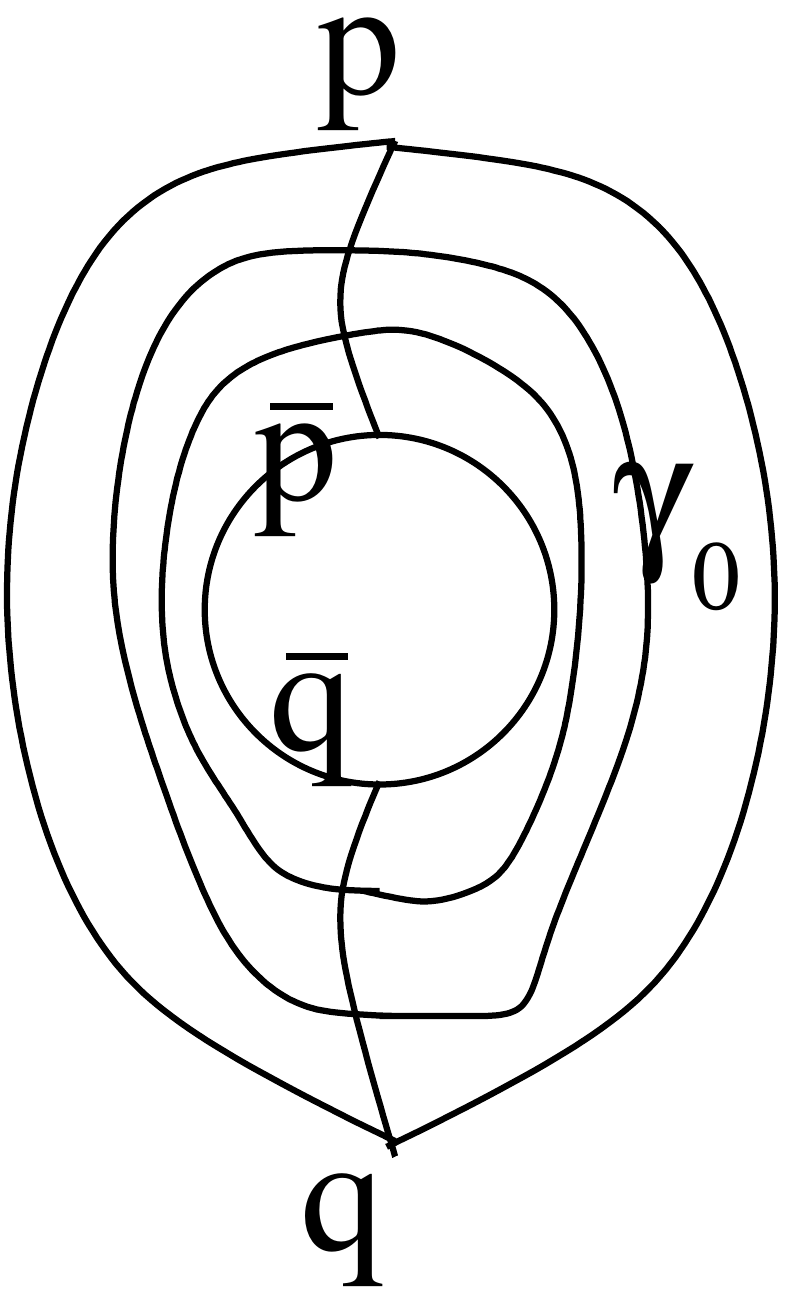} 
\caption{}
\label{LS8}
\end{figure}
\par
Let $\bar p$ and $\bar q$ denote points of $\gamma_0$ that are the closest to,
correspondingly, $p$ and $q$ (see Fig. ~\ref{LS8}).
Connect $p$ and $\bar p$ by a minimizing geodesic $\alpha$.
It intersects each convex piecewise geodesic curve  $h_t$ at a point. (It should enter
the domain bounded by $h_t$, but if it exists this domain, then it cannot return back
because of the convexity of all curves $h_s$ -see section 3.2 for the details
of this argument.) Also, it is easy to see that the point
of intersection of $\alpha$ and $h_t$ continuously depends on $t$. We can similarly connect $q$ and $\bar q$ by a minimizing
geodesic $\beta$. It will also intersect each $h_t$ at exactly one point that continuously depends on $t$.
\par
Now each of two sides of the digon $\partial D$ can be homotoped through curves that go along a segment of
$\alpha$ to its point of intersection with $h_t$, then go along one
of two arcs of $h_t$ to its point of intersection with $\beta$,
and then follow $\beta$ to $q$.
\par
In order to complete these two segments of a path homotopy to a desired
path homotopy
connecting both sides of $\partial D$ we need to connect two arcs
of $\gamma_0$ between $\bar p$ and $\bar q$ by a path homotopy 
passing through paths of length $\leq 8d+\epsilon$ (and to attach $\alpha $ and $\beta$ to the ends
of each of these paths) (see Fig. ~\ref{LS7}).

\begin{figure}[center] 
\includegraphics[scale=0.3]{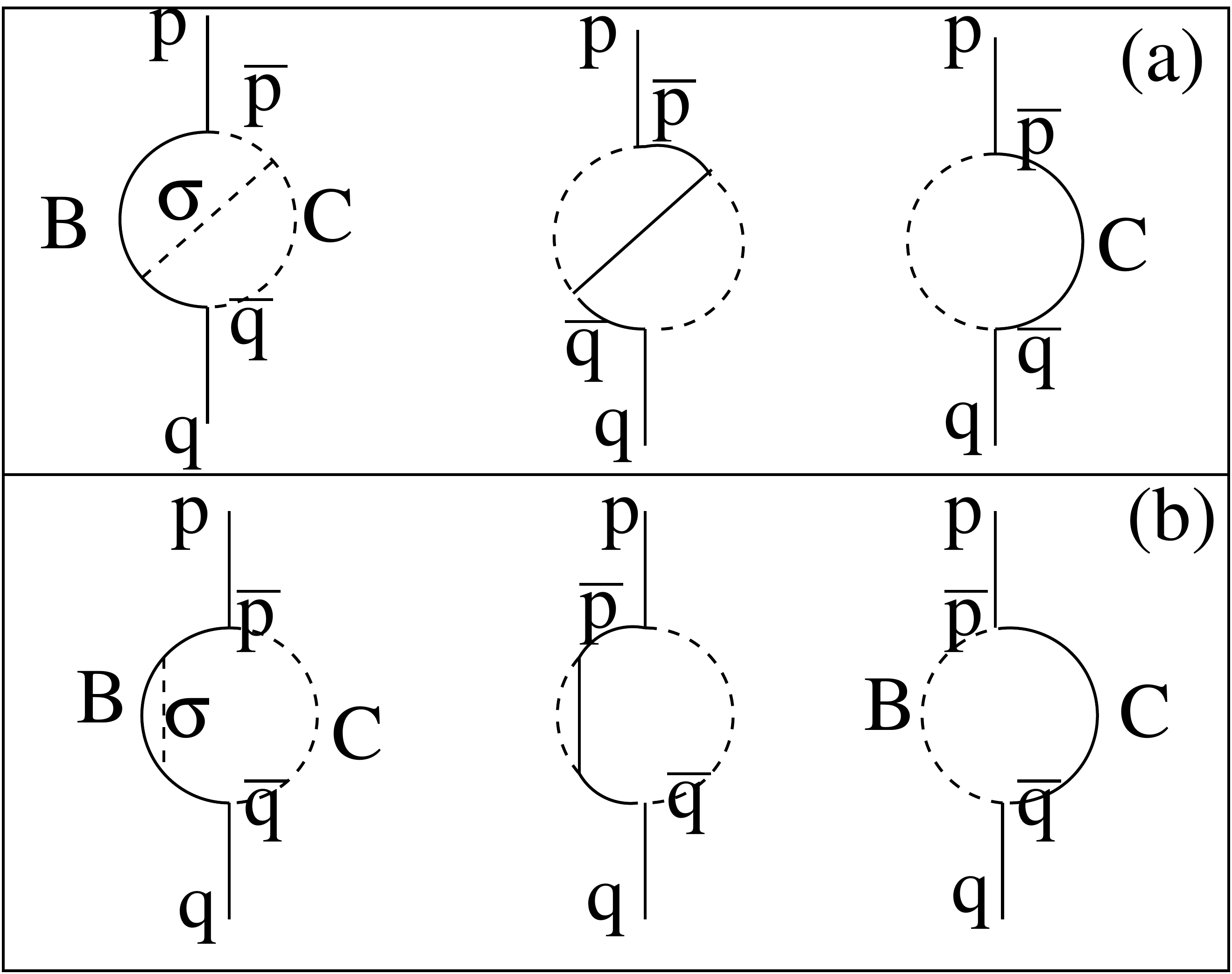} 
\caption{}
\label{LS7}
\end{figure}

\par
To achieve this last goal we first consider the case, when  there exists 
a pair of distinct points $r$ and $s$ on $\gamma_0$ that
can be connected by a minimizing geodesic segment $\sigma$ such that its
interior lies entirely
in the open subdisc of $D$ bounded by $\gamma_0$. Denote arcs of $\gamma_0$
connecting $r$ and $s$ by $B$ and $C$. Then digons $B\bigcup \sigma$ and
$\sigma\bigcup C$ are convex. These two digons can be contracted 
by means of a length nonincreasing homotopy using the Birkhoff curve-shortening
procedure, as our assumptions imply that  there are no ``short" non-trivial
periodic geodesics inside of these two digons. Therefore there
exists a path homotopy between $B$ and $C$
that passes through $\sigma$ and consist of paths of length $\leq 6d+\epsilon$ connecting
$r$ and $s$. (Here we are using Lemma 3.1.)
Now we need to consider cases when both points $\bar p$ and $\bar q$ are on the same arc, say, $B$, and another case when one of two points (say, $\bar q$) is on $B$
and another on $C$. In the first case we first attach longer and longer segments
of $B$ traversed in both directions at the beginning and and at the end
of the arc conecting $\bar p$ and $\bar q$ inside $B$. At the end we 
obtain the arc that goes from $\bar p$ to one of the ends of $B$, then follows
$B$ to its other end and then returns to $\bar q$. Now we homotope the 
middle part of the path, $B$, to $C$ via paths of length $\leq 6d+\epsilon$ keeping
its endpoints fixed. We end up
with the compementary arc of $\gamma_0$ connecting $\bar p$ and $\bar q$. The 
resulting homotopy
passes through paths of length not exceeding $8d+\epsilon$. In the second case 
we start from an arc connecting $\bar p$ and $\bar q$, then elongate
its subarc in $B$ by inserting longer and longer  complementary
subarcs of $B$ until the midle part of the curve will be $B$. Again,
we homotop it to $C$ keeping the endpoints fixed, and remove the unnecessary
arc of $C$ which will be traversed twice in different directions.
\par
Now we consider the case, when no such $r,s\in\gamma_0$ exist.
In this case for each point
$x\in\gamma_0$ any minimizing geodesic between $p$ and $x$ is in the
annulus formed by $\partial D$ and $\gamma_0$. (Of course, the same is true
for $q$ and $x$.) Indeed, this minimizing geodesic cannot cross two minimal
geodesics forming $\partial D$, and also cannot cross $\gamma_0$ because
of our assumption. Now note that in the argument above we do not need to assume
anymore that $\bar p$ is the closest to $p$ point on $\gamma_0$, and
$\bar q$ is the closest to $q$ point on $\gamma_0$. Instead we can
choose $\bar p$ and $\bar q$ to be arbitrary points on $\gamma_0$
such that minimal geodesics between $p$ and $\bar p$ 
as well as $q$ and $\bar q$ do not intersect.
But before explaining how to choose $\bar p$ and $\bar q$
we are going to state the following proposition and review
its proof given in [NR]:


\par\noindent
\begin{Pro} \label {Proposition 4.1.} Let $\gamma_0$ be a simple closed curve of length $L$ in
a Riemannian surface of diameter $d$ that bounds a disc $D_0$. Denote
$\max_{x\in D_0}dist(x,\gamma_0)$ by $r$. Curve $\gamma_0$ might be a periodic
geodesic of index $0$, but assume that there are no other non-trivial periodic geodesics of index $0$ of length $\leq L+2r$ in $D_0$.
Then  for each $\epsilon>0$ $\gamma_0$ can be contracted to a point via simple closed curves of length $\leq \max\{{L\over 2},2r\}+ L+4r+\epsilon$. Moreover,
these curves are pairwise non-intersecting and are contained in $D_0$. 
\end{Pro}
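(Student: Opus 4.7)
The plan is to produce a monotone length-controlled contraction of $\gamma_0$ to a point inside $D_0$ through simple closed curves, and then to upgrade this via Lemma 3.1 into the claimed bound. Set up the scaffold first: fix a point $x\in D_0$ realizing $dist(x,\gamma_0)=r$, choose a closest boundary point $y\in\gamma_0$, and let $\tau\subset \overline{D_0}$ be a minimizing geodesic from $y$ to $x$ of length $r$. Split the argument according to whether $\gamma_0$ is or is not itself a non-trivial periodic geodesic of index $0$.

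If $\gamma_0$ is not such a geodesic, then apply Birkhoff (or Hass-Scott) curve-shortening directly to $\gamma_0$ inside $D_0$. The hypothesis forbids any non-trivial index-$0$ periodic geodesic of length $\leq L$ in $D_0$ other than possibly $\gamma_0$ itself, so the flow cannot stall and must contract $\gamma_0$ monotonely through convex simple curves of length $\leq L$ to a point. Lemma 3.1 then immediately yields a contraction through simple loops of length $\leq L+2r+\epsilon$, well within the required estimate.

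The main work is the case where $\gamma_0$ itself is an index-$0$ geodesic, and hence is fixed by the flow. Here the idea is to first modify $\gamma_0$ by attaching a thin U-shaped spike along $\tau$: replace a tiny arc of $\gamma_0$ near $y$ by a thin tube running out along a small perturbation of $\tau$ to near $x$ and back, producing a simple closed curve $\tilde\gamma$ of length at most $L+2r+\epsilon/4$ bounding a subdisc $\tilde D_0\subset D_0$. Since $\tilde\gamma$ is no longer a geodesic, and by hypothesis no index-$0$ periodic geodesic of length $\leq L+2r$ other than $\gamma_0$ lies in $D_0$, the curve-shortening flow applied to $\tilde\gamma$ inside $\tilde D_0$ contracts it to a point through convex simple curves of length $\leq L+2r$. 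The $\max\{L/2,2r\}$ contribution to the final bound enters from the intermediate pinching stage, in which the spike tip separates an auxiliary loop whose length is dominated by either the shorter half of $\gamma_0$ (of length $\leq L/2$) or the round trip along $\tau$ (of length $\leq 2r$), whichever is larger. Concatenating the spike-attachment homotopy with the Birkhoff contraction of $\tilde\gamma$ and invoking Lemma 3.1 on the resulting monotone composite yields a family of pairwise disjoint simple closed curves in $D_0$ of length at most $\max\{L/2,2r\}+L+4r+\epsilon$.

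The principal obstacle is in the hard case: ensuring that the curve-shortening flow applied to $\tilde\gamma$ cannot drift back to $\gamma_0$ itself, which would trap it away from a point contraction. The remedy is to restrict the flow to $\tilde D_0$ and argue, using the fact that a slit neighborhood of $\tau$ has been removed and hence $\gamma_0\not\subset \tilde D_0$, that $\gamma_0$ is no longer an accessible limit inside this smaller disc. A secondary technical issue is the careful length bookkeeping through the spike pinch-off step, so as to secure the constant $\max\{L/2,2r\}$ rather than a cruder $L$.
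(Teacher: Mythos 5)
Your Case 1 is essentially fine (modulo the convexity of $\gamma_0$ needed to run Birkhoff inside $D_0$, a point the paper also glosses over), but the main case --- $\gamma_0$ an index-$0$ geodesic --- has a genuine gap, and it sits exactly where you flagged your ``principal obstacle.'' The spiked curve $\tilde\gamma$ is badly non-convex with respect to $\tilde D_0$ (the slit is a deep concavity), so none of the curve-shortening machinery this paper relies on applies to it: the Birkhoff process is only guaranteed to produce simple curves, stay inside the disc, and be monotone when the initial curve is convex, and Lemma 3.1 explicitly requires a monotone homotopy through convex curves. There is therefore no well-defined ``flow restricted to $\tilde D_0$''; minimizing replacement arcs can cross the slit, and once curves re-enter $D_0$ they are in the basin of the locally minimizing geodesic $\gamma_0$. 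Moreover, even granting a flow confined to $\tilde D_0$, your argument that ``$\gamma_0\not\subset\tilde D_0$, hence $\gamma_0$ is not an accessible limit'' does not work: limits of curves in $\tilde D_0$ live in $\overline{\tilde D_0}$, whose boundary contains all of $\gamma_0$ except a tiny arc, and the expected behaviour is precisely that the thin spike retracts (its tip has enormous curvature) and the curve settles back onto $\gamma_0$. Your length bookkeeping for $\max\{L/2,2r\}$ is likewise not anchored to any concrete step of the construction.

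The paper's proof avoids this by a different decomposition. Take $z\in D_0$ realizing $\max_x\operatorname{dist}(x,\gamma_0)=r$; a Berger-type argument gives points $x_1,\dots,x_k\in\gamma_0$ ($k\ge 2$) with $\operatorname{dist}(z,x_i)=r$ whose minimizing geodesics to $z$ subdivide $T_zM$ into angles $\le\pi$ and meet $\gamma_0$ perpendicularly. These geodesics cut $D_0$ into \emph{convex} triangles $T_i=zx_ix_{i+1}$, each of perimeter $\le L+2r$ and none containing $\gamma_0$, so by hypothesis Birkhoff contracts each $\partial T_i$ monotonically to a point; Lemma 3.1 converts each contraction into a path homotopy sweeping the arc $x_ix_{i+1}$ across $T_i$, and concatenating these (saving the triangle with the longest $\gamma_0$-arc for last) contracts $\gamma_0$ within the stated bound. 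The convexity of the pieces, which your spike construction destroys, is the ingredient that makes both the curve-shortening step and Lemma 3.1 legitimate; if you want to salvage your approach you would need to replace the slit by a subdivision into convex regions, which is in effect what the triangulation does.
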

\par\noindent
\begin{proof} Let $z$ be a point in $D_0$ at the maximal distance
from $\gamma_0=\partial D$. Then there exist points $x_1,x_2,\ldots x_k\in\gamma_0$ such that
for each $i$ $dist(z, x_i)=dist (x,\gamma_0)$, $k\geq 2$,
and tangent vectors at $z$ to minimizing geodesics from $z$ to $x_i$
subdivide the tangent space $T_zM$ into angles $\leq\pi$ (see Fig.  ~\ref{LS1} ).

\begin{figure}[center] 
\includegraphics[scale=0.3]{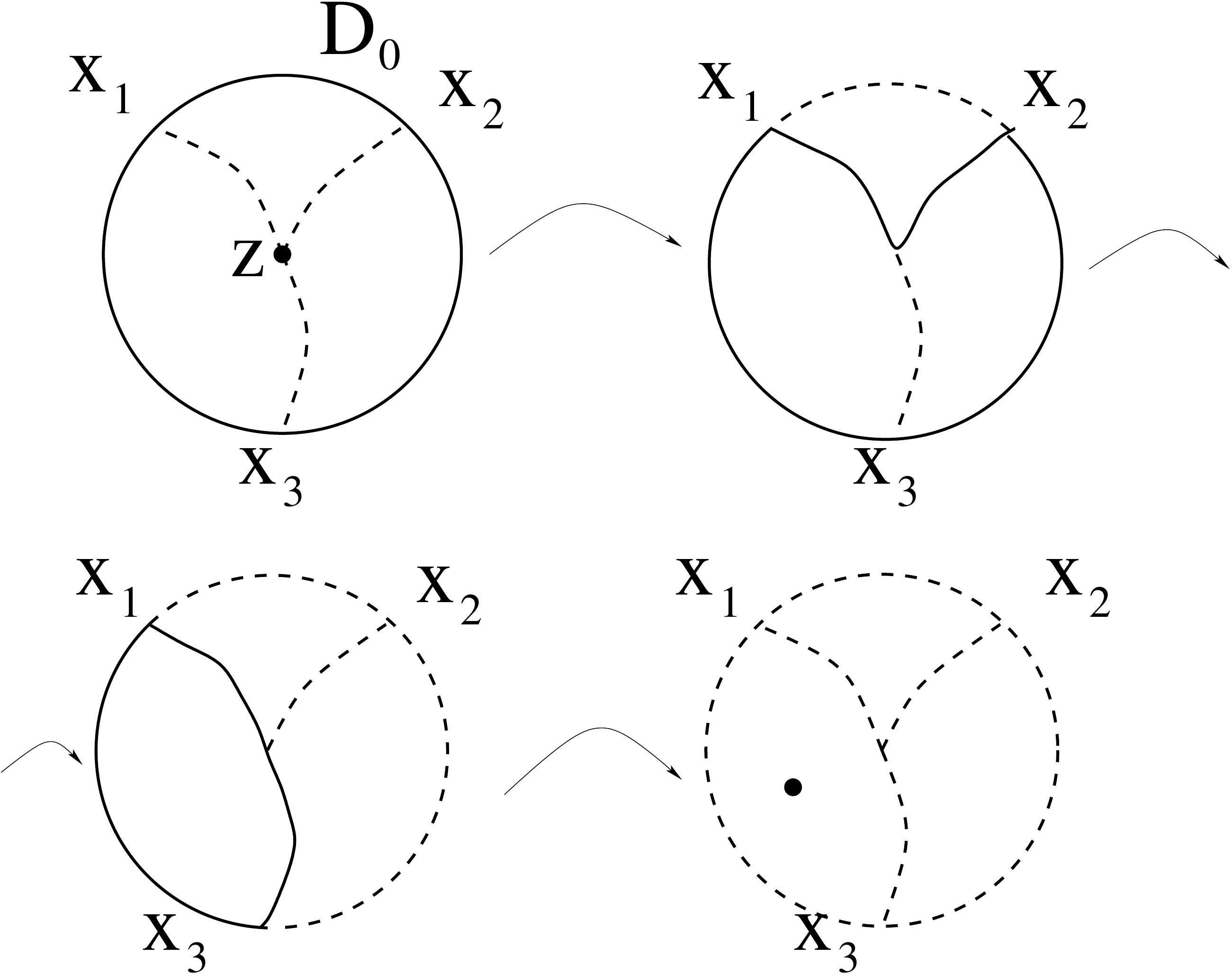} 
\caption{}
\label{LS1}
\end{figure}

(Otherwise, one
can move $z$ further away from $\gamma_0$ as in the proof of Berger's Lemma.)
As the geodesics $zx_i$ are perpendicular to $\gamma_0$ at $x_i$, they
divide $D_0$ into convex triangles $T_i=zx_ix_{(i\mod k) +1}$.
Birkhoff's curve-shortening process contracts the boundary of each triangle
$T_i$ to a point inside this triangle; the resulting homotopy $H_i$
is monotone and length non-increasing. Lemma 3.1 implies
that each homotopy $H_i$ can be easily
converted into a path homotopy connecting one side (or two sides) of $T_i$
with the curve formed by the two other sides (respectively, the other side)
such that this path homotopy passes through
$T_i$ and ``eliminates" $T_i$. Lemma 3.1 also provides an upper bound
for the lengths of curves in this path homotopy.
We can assume that the triangle corresponding
to the longest arc $x_ix_{(i\mod k)+1}$ of $\gamma_0$ will be the
last one being contracted. (This helps to make an upper bound for the lengths
of curves during the homotopy somewhat smaller.)
An easy calculation proves the claimed upper bound for the lengths of curves
in the homotopy contracting $\gamma_0$ to a point.
\end{proof}

Consider the triangles $T_i$ defined in the previous proof. Among them
choose a triangle with the maximal value of
$\max_{y\in T_i}$ dist$(y,\partial T_i)$. Denote this triangle
by $T_m$, and its vertices
on $\gamma$ by $x_m$ and $x_{m+1}$. We choose the pair of points $x_m, x_{m+1}$ as the pair
$\bar p, \bar q$.
%

More precisely, if $dist(p, x_m)+dist(q, x_{m+1})\leq
dist(p, x_{m+1})+dist(q, x_m)$, then $\bar p=x_m,\ \bar q=x_{m+1}$, and,
otherwise, $\bar p=x_{m+1},\ \bar q=x_m$. This ensures that minimal geodesics between $p$ and $\bar p$ and between $q$ and $\bar q$  do not intersect.

Note that there is at most one triangle $T_i$ such that 
$\max_{y\in T_j}$dist$(y,\partial T_j)> {d\over 2}$. Indeed, otherwise
there would be two triangles $T_{j_1}, T_{j_2}$ and points $z_1\in T_{j_1}$,
$z_2\in T_{j_2}$ such that dist $(z_1,z_2)>d$, which is impossible.

\begin{figure}[center] 
\includegraphics[scale=0.3]{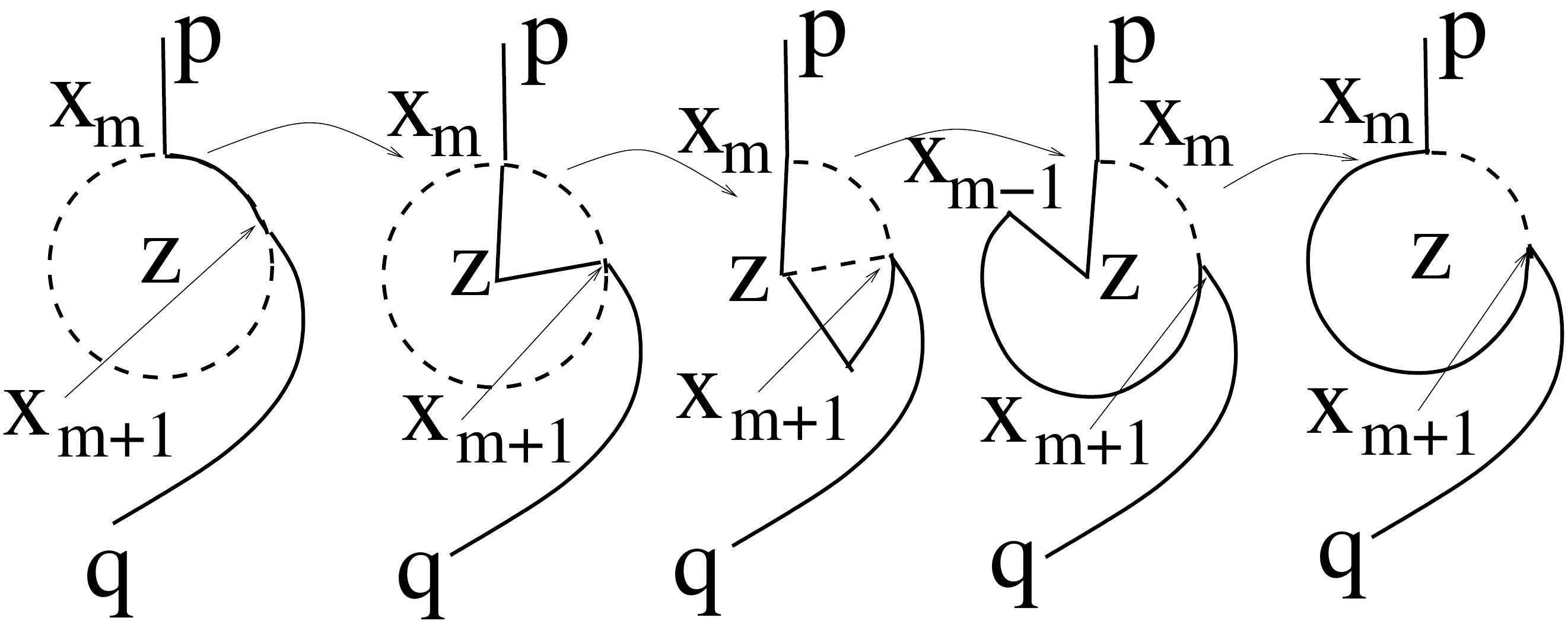} 
\caption{}
\label{LS13}
\end{figure}

Now it remains to find a path homotopy between the side $(x_mx_{m+1})$ of $T_m$ and its complementary arc in $\gamma$ that passes through  simple curves of length $\leq 8d+\epsilon$
contained in $D_0$ (see Fig. ~\ref{LS13}). This path homotopy will be constructed in stages.
On the first stage we construct a path homotopy in $T_m$ between $x_mx_{m+1}$ and the broken geodesic formed by two
other sides $x_mzx_{m+1}$ of the triangle $T_m$. We apply Lemma 3.1 to construct
this path homotopy, and this lemma implies that the length of the
paths during the homotopy will not exceed $8d+\epsilon$. (Indeed, using the notations
of Lemma 3.1, $L\leq 4d$, $2r\leq 2d$ and
$\max\{$length$(A_1),$ length$(A_2)\}\leq 2d$.)
On each of the next stages we take either a triangle $T_j$ 
  adjacent to one of the triangles $T_i$ that were already covered by the image
of the constructed path homotopy that is the furthest in the clockwise direction (typically, $j=i+1$), and construct a path homotopy in $T_j$ between
the common side 
of $T_i$ and $T_{j}$ 
and two other sides
of $T_j$, 
or do the same for the triangle adjacent to the
``processed" triangle that is the furthest in the counterclockwise direction. One of these two other sides of $T_j$ will be a minimal geodesic from $z$ to a new vertex of $T_j$,
the other will be an arc of $\gamma$ that will
be added to already constructed part of $\gamma\setminus (x_mx_{m+1})$.
The common side $zx_i$ of $T_i$ and $T_j$ as well as an arc of $\gamma$
constructed on previous stages remain attached at the beginning of all paths
obtained during this path homotopy. Note that we have a choice of
which of the
remaining triangles (other than the first one) to eliminate at the
last step of the procedure. We choose the last traingle to be the triangle
such that its side on $\gamma$ is the longest among the remaining triangles.
This automatically
means that the lengths of the side on $\gamma$ of each of the intermediate 
triangles will not exceed $d$. In this way we ensure that when we apply
Lemma 3.1 on each of the intermediate steps
$\max\{$length$(A_1),$ length$(A_2)\}\leq 2d$.
On the last step we eliminate the last triangle that we denote $T_{s}$ by constructing the path 
homotopy between two sides of $T_{s}$ adjacent to $z$ and the third side
which is an arc of $\gamma$. The already constructed arc of $\gamma$
complementary to the side of $T_s$ on $\gamma$ is attached to all these paths.
Again, an application of Lemma 3.1 implies that all these paths have the length
not exceeding $8d+\epsilon$.
(Indeed, as before, $\max\{$ length$(A_1),\ $length$(A_2)\}$
does not exceed $2d$. Also note
that on each step the total length of all arcs of $\gamma$ that already
were constructed as well as the new one that enters $d$ does not exceed $2d$.
On all of the steps starting from the second one $2r\leq d$, but on
each of the intermediate steps (between the first step and the last step)
we also keep $zx_i$ fixed during the homotopy, and the length of this segment
adds at most $d$ to our upper bound for the lengths of curves during
the constructed homotopy.
Finally, the lengths of two geodesics
between $z$ and two other vertices of $T_{s}$ also enter $L$ on the last step.
Their total length also does not exceed $2d$. Summarizing all these
observations we see that the upper estimate provided by Lemma 3.1 on each of the steps of our construction is $8d+\epsilon$, as desired.

\section{Slicing of $M$ into short curves in the presence of more than one short simple periodic geodesic of index zero}

Now we are going to consider the case, when there exists a simple 
periodic geodesic of length $\leq 2d$ and index $0$ as well as another
simple
periodic geodesic of length $\leq 4d$ and index $0$.
If there are three simple periodic geodesics of length $\leq 6d$, then we are
done.  Therefore, without any loss of generality we can assume that there
is no third simple
periodic geodesic of length $\leq 6d$ of index $0$.
\par
Our goal will be to construct a relative $1$-cycle of $(\Pi M,\Pi_0 M)$
representing the non-trivial homology class with $\mathbb{Z}_2$ coefficients.
This relative $1$-cycle will be
represented by a map of an interval so that the endpoints
of the interval are mapped into constant curves, and all intermediate
points are mapped into simple closed curves of length $\leq 20d$. As we
saw in section 2, this will be sufficient to complete the proof of the theorem.
\par
As in the previous section we will consider the digons formed
by minimal geodesics $p$ and $q$, where $d(p,q)=d$  and all angles
in all digons do not exceed $\pi$. We will be trying to connect
the sides of each digon by a path homotopy passing
through curves of length $\leq 8d+\epsilon$. This will be obviously sufficient for our purposes.
If the boundary of each digon can be contracted by means of the Birkhoff
curve-shortening process to a point, then Lemma 3.1 implies that
this is, indeed, possible, and we are done.
But, in principle, an application of the Birkhoff curve-shortening process
can end at a simple closed geodesic of length $\leq 2d$. This can happen
with one or two digons.
\par
Case 1. The application of the Birkhoff process to two distinct
digons results in simple closed geodesics. In this case we can be sure that
1) the lengths of these geodesics do not exceed $2d$; 2) These geodesics
are simple and distinct; 3) They have index $0$. Now our assumption
implies that there are no other simple geodesics of index $0$ and
length $\leq 6d$.
This means that each of these two periodic
geodesics can be contracted to a point trhough simple pairwise non-intersecting
closed curves of length $\leq 8d$ as in Lemma 4.1. Moreover, the images 
of these two homotopies are contained in the discs bounded by corresponding
geodesics that are contained in the interiors of the corresponding digons.
Now we can use one of these homotopies to go from a point to
one of these two geodesics. Then we use the Birkhoff curve-shortening homotopy
in reverse to go from the geodesic to the boundary of the corresponding digon.
Then we continue our homotopy by moving one of two minimizing
geodesics though neighboring digons using the fact that their boundaries
can be contracted by means of the Bikhoff curve-shortening process and
Lemma 3.1. We arrange this stage so that the last remaining digon will
be the second digon that cannot be contracted by means of the Birkhoff
curve-shortening proces to a point. Once we obtain this digon we apply
the Birkhoff curve-shortening process that ends at the second simple
periodic geodesic. Finally, we contract this geodesic inside the disc
contained in the second digon via simple closed curves of length
$\leq 8d+\epsilon$ as in Lemma 4.1. As the result, we obtain a desired relative 
$1$-cycle in $(\Pi M,\Pi_0 M)$, where the lengths of all curves do not
exceed $8d+\epsilon$.
\par
Case 2. The application of the Birkhoff curve-shortening process
to all digons but one contracts them to points. The application
of the Birkhoff curve shortening process to the last digon
ends at a simple periodic geodesic $\gamma_1$. The length
of $\gamma_1$ does not exceed $2d$. Denote the disc bounded by
$\gamma_1$ inside the digon by $D$, and its complement by $E$. One can contract $\gamma_1$
through ``short" simple pairwise non-intersecting curves to a point
inside $E$ as follows: First, connect $\gamma_1$ and the boundary
of the corresponding digon by means of the Birkhoff curve-shortening process,
where time goes in the opposite direction. Then keep one of two 
sides of this digon fixed and move the other side through all digons one
by one using Lemma 3.1. When there will be only one last digon left, apply
the Birkhoff curve-shortening process to contract it. The length
of the closed curves during this homotopy does not exceed $6d+\epsilon$.
\par
Now we would like to contract $\gamma_1$ to a point inside $D$ via simple
pairwise non-intersecting closed curves of length $\leq 10d+\epsilon$. The combination
of these two homotopies will yield the desired relative $1$-cycle.
\par
As in the proof of Lemma 4.1, choose a point $x\in D$ that maximizes
$dist (x,\gamma_1)$. Minimal geodesic segments from $x$ to $\gamma_1$
subdivide $D$ into a finite number of convex triangles. The application
of the Birkhoff curve-shortening process to all of these triangles but
possibly one contract them to points. As in the proof of Lemma 4.1 we can
move arcs of $\gamma_1$ through all these triangles but possibly one
eliminating these triangles in the process. The lengths of all curves during these
homotopies will not exceed $9d+\epsilon$. At the end of this stage we obtain
the boundary of the last triangle. If the application of
the Birkhoff curve-shortening process to this curve ends at a point,
then we are done. Otherwise, we end at the second simple periodic
geodesic of index $0$. Denote this geodesic by $\gamma_2$. The length of $\gamma_2$ does
not exceed $4d$. The open disc bounded by $\gamma_2$ inside the last triangle
cannot contain a simple periodic geodesic of index $0$ and length $\leq 6d$.
Therefore, another application of Lemma 4.1 implies that we can contract
$\gamma_2$ to a point inside this disc through simple pairwise
non-intersecting curves of length $\leq 10d+\epsilon$, and we are done.

\section{Concluding remarks.}

Observe that, in fact, we proved the following theorem:

\begin{theorem} \label {Theorem5.1}
Let $M$ be a Riemannian $2$-sphere of diameter $d$. There exist three
distinct simple periodic geodesics on $M$ that satisfy exactly
one of the following
four sets of (mutually exclusive) conditions:
\par\noindent
(i) All three geodesics have index $0$. Their lengths do no exceed $2d$, $4d$
and $6d$, correspondingly.
\par\noindent
(ii) Two of these geodesics have index $0$. Their lengths do not exceed
$2d$ and $4d$. The third geodesic has a positive index and length $\leq 10d$.
\par\noindent
(iii) One of these geodesics has index $0$ and length $\leq 2d$. Two other
geodesics have positive indices and lengths $\leq 8d$ and $\leq 20d$,
correspondingly.
\par\noindent
(iv) All three geodesics have positive indices. Their lengths do not exceed
$5d$, $10d$ and $20d$, correspondingly.
\end{theorem}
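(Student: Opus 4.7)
The plan is to reorganize the constructions of Sections 3, 4 and 5 as a case analysis on the number of ``short'' simple periodic geodesics of index zero that $M$ admits. As in Croke's setup, I fix two points $p, q \in M$ at maximal distance $d$ and the decomposition of $M$ into convex geodesic digons bounded by pairs of minimizing geodesics from $p$ to $q$, to which Birkhoff curve-shortening can be applied.

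I would first determine which of the four cases applies by testing the hypotheses in order. If $M$ admits three distinct simple periodic geodesics of index $0$ with lengths $\leq 2d$, $\leq 4d$, and $\leq 6d$, then condition (i) is satisfied directly. Otherwise I may assume that at most two, one, or zero of these constraints are realized, which is precisely the running hypothesis in Sections 5, 4 and 3, respectively. These four alternatives are mutually exclusive and exhaustive by construction.

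The heart of the proof is then to invoke, in each remaining case, the meridional slicing or relative cycle built in the corresponding section and to apply Proposition 2.2 (or its one-dimensional specialization). In case (iv), where no index-$0$ geodesic of length $\leq 2d$ exists, Section 3 supplies a slicing by simple curves of length $\leq 5d + \epsilon$; combined with the homologous $1$-cycle of curves of length $\leq 5d + \epsilon$ from Section 3.3, Proposition 2.2 yields three positive-index geodesics of lengths at most $5d$, $10d$ and $20d$. In case (iii), where exactly one index-$0$ geodesic of length $\leq 2d$ exists and no other of length $\leq 4d$, Section 4 constructs a slicing of length $\leq 10d + \epsilon$ together with a homologous $1$-cycle of length $\leq 8d + \epsilon$, so that Proposition 2.2 yields two positive-index geodesics of lengths $\leq 8d$ and $\leq 20d$, which together with the given index-$0$ geodesic constitute the three of (iii). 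In case (ii), where two index-$0$ geodesics of lengths $\leq 2d$ and $\leq 4d$ exist but no third of length $\leq 6d$, Section 5 produces a relative $1$-cycle whose curves have length $\leq 10d + \epsilon$ (the bound $10d$ being forced by Case 2 of Section 5, where a secondary geodesic $\gamma_2$ inside the disc bounded by $\gamma_1$ appears during the contraction). The one-dimensional version of Proposition 2.2 then supplies the required third positive-index geodesic of length $\leq 10d$.

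The main obstacle is the detailed verification, already performed in Sections 3--5, that the various homotopies assembled from the Birkhoff curve-shortening process, the triangle decomposition of Proposition 4.1 and the perturbations described in Lemma 3.1 actually pass through simple pairwise non-intersecting curves satisfying the stated length bounds, and that the resulting relative cycles are homologous to the Lyusternik--Shnirelman generators $z_1, z_2, z_3$ from Section 2.1. Once these geometric realizations are in place, the positivity of the index of the geodesics produced by Proposition 2.2 ensures they are distinct from the index-$0$ geodesics isolated in each case, so the three geodesics in (i)--(iv) are genuinely distinct, completing the proof.
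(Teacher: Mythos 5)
Your proposal is correct and follows essentially the same route as the paper: Theorem 5.1 is stated there precisely as a summary of the case analysis carried out in Sections 3--5, organized by the number of short index-zero simple periodic geodesics, with Proposition 2.2 (and the improved homologous $1$-cycles of lengths $5d+\epsilon$, $8d+\epsilon$, $10d+\epsilon$) supplying the positive-index geodesics in each case. Your identification of which section yields which alternative, the length bounds, and the distinctness argument via positivity of index all match the paper's reasoning.
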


Our last remark pertains to the situation, when one  does not
have a meridional slicing of $M$ into nonself-intersecting curves of
length $\leq Cd$ for some large $C$. (Recall, that the existence of such a 
slicing would imply an upper bound $const\ d$ for lengths of three
simple periodic geodesics of positive index that appear in the original
Lyusternik-Shnirelman proof.) Note, that
iterating the same idea as in the last section,
we will be able to conclude the existence of $k$
simple pairiwise non-intersecting
geodesics with length $\leq 2d$, $\leq 4d$, $\ldots$, $\leq 2kd$, where
$k\sim {C\over 4}$.










\small

\normalsize

\begin{tabbing}
\hspace*{7.5cm}\=\kill
Yevgeny Liokumovich                 \>Alexander Nabutovsky\\             
Department of Mathematics           \>Department of Mathematics\\ 
University of Toronto               \>University of Toronto\\
Toronto, Ontario M5S2E4             \>Toronto, Ontario M5S 2E4\\
Canada                              \>Canada\\
email: liokumovich@math.toronto.edu \>alex@math.toronto.edu\\
\end{tabbing}

\par\noindent
Regina Rotman
\par\noindent
Department of Mathematics
\par\noindent
University of Toronto
\par\noindent
Toronto, Ontario M5S2E4
\par\noindent
Canada
\par\noindent
rina@math.toronto.edu
\par\noindent


\begin{thebibliography}{BaGSc}


\normalsize
\bibitem[BCK]{BCK}{\sc F. ~Balacheff, C.B. ~Croke, M. ~Katz},
A Zoll counterexample to a geodesic length conjecture,  Geom. Funct. Anal. 19(2009),
1-10.

\bibitem[B]{B}{\sc W. ~Ballmann}, Der Satz von Lusternik und Schnirelmann, Beltr\"age
zur Differential geometrie, Heft 1, pp. 1-25, Bonner Math. Schriften, 102, Univ. Bonn, 1978.

\bibitem[B1]{B1}{\sc W. ~Ballmann},  On the lengths of closed geodesics on
convex surfaces, Invent. Math. 71(1983), 593-597.

\bibitem[Cr]{Cr}{\sc  C. B. ~Croke}, Area and the length of the shortest periodic geodesic, J. Differential Geometry, 27(1988), 1-21.

\bibitem[FK]{FK}{\sc S. ~Frankel, M. ~Katz}, The Morse landscape of a Riemanniandisk, Ann. Inst. Fourier (Grenoble) 43(1993), No. 2, 503-507.

\bibitem[Gr]{Gr}{\sc M. ~Grayson}, 
Shortening embedded curves, Ann. of Math. (2) 129(1989), No. 1, 71-111.

\bibitem[HS]{HS}{\sc J. ~Hass, P. ~Scott}, Shortening curves on surfaces,
 Topology 33(1994), No. 1, 25-43.

\bibitem[J]{J}{\sc  J. ~Jost}, A nonparametric proof of the theorem of Lyusternik and Schnirelman, Arch. Math. (Basel), 53(1989), 497-509.

\bibitem[L]{L}{\sc Y. ~Liokumovich}, Surfaces of small diameter with large
width, J. Topol. Anal. 6(2014), No. 3, 383-396.

\bibitem[LNR]{LNR}{\sc Y. ~Liokumovich, A. ~Nabutovsky, R. ~Rotman},
Contracting the boundary of a riemannian $2$-disc, submitted for
piblication, arXiv:1205.5474.

\bibitem[LS]{LS}{\sc L. ~Lusternik, L. ~Shnirelman}, Sur le probl`eme de trois
geod\'esiques ferm\`ees sur les surfaces de genre $0$, C.R. Acad. Sci.
paris 189(1929), 269-271.

\bibitem[Ly]{Ly}{\sc  L. ~Lyusternik}, Topology of functional spaces and the calculus of variations at large, Trudy Mat. Inst. Steklov 19(1947).

\bibitem[M]{M}{\sc M. ~Maeda}, The length of a closed geodesic on a compact surface, Kyushu J. Math. 48(1994), 9-18. 

\bibitem[NR0]{NR0} {\sc A. ~Nabutovsky, R. ~Rotman}, The length of the shortest closed geodesic on a $2$-dimensional sphere, Int. Math. Res. Not., 2002, No. 23, 1211-1222.

\bibitem[NR]{NR} {\sc A. ~Nabutovsky, R. ~Rotman}, Lengths of simple periodic 
geodesics on two-dimensional Riemannian spheres, J. Topol. Anal. 3(2011), no. 4, 423-432.

\bibitem[T0]{T0} {\sc I. A. ~Taimanov}, Closed extremals on two-dimensional manifolds, Russian Math. Surveys 47(1992), 163-211.

\bibitem[T]{T} {\sc I. A. ~Taimanov}, On the existence of three 
nonselfintersecting closed geodesics on manifolds homeomorphic to the
2-sphere, Russian Acad. Sci. Izv. Math. 40(1993), No. 3, 565-590.

\bibitem[S]{S}{\sc S. ~Sabourau}, Filling radius and short closed geodesics of the $2$-sphere, Bull. Math. Soc. France , 132(2004), 105-136.
\par\noindent

\end{thebibliography}
\end{document}